\newcommand{\br}{\mathbb{R}}
\newcommand{\varep}{\varepsilon}
\newtheorem{thm}{Theorem}[section]
\newtheorem{lemma}[thm]{Lemma}
\newtheorem{remark}[thm]{Remark}
\numberwithin{equation}{section}
\begin{document}

\title{On $L^p$ Estimates in Homogenization \\ of Elliptic Equations of Maxwell's Type}

\author{Zhongwei Shen
\and Liang Song}

\date{ }

\maketitle

\begin{abstract}

For a family of second-order elliptic systems of Maxwell's type with rapidly oscillating periodic coefficients in a $C^{1, \alpha}$ domain $\Omega$,
we establish uniform estimates of solutions $u_\varep$ and $\nabla \times u_\varep$ in $L^p(\Omega)$
for $1<p\le \infty$. The proof relies on the uniform $W^{1,p}$ and Lipschitz estimates
for solutions of scalar elliptic equations with periodic coefficients.

\end{abstract}

{\it Keywords:} Homogenization; Maxwell's Equations; $L^p$ Estimates.

\section{Introduction}

Let $\Omega$ be a  bounded $C^{1, \alpha}$ domain in $\br^3$ for some $\alpha>0$ and $n$ denote the outward unit normal to $\partial\Omega$.
Let $A(y)=(a_{ij}(y))$ and  $B(y)=(b_{ij} (y))$ be  two $3\times 3$ matrices with real entries satisfying the ellipticity
conditions:
\begin{equation}\label{ellipticity}
 \mu |\xi|^2 \le a_{ij} (y) \xi_i\xi_j \le \frac{1}{\mu} |\xi|^2, \quad  \mu |\xi|^2 \le b_{ij} (y) \xi_i\xi_j \le \frac{1}{\mu} |\xi|^2
\end{equation}
for any $\xi, y\in \br^3$ and some $\mu>0$.
Consider the second-order elliptic system of Maxwell's type:
\begin{equation}\label{max-equation}
\nabla \times \big(A(x/\varep) \nabla \times u_\varep\big) + B(x/\varep)u_\varep  = F +\nabla \times G  \quad  \text{ in } \Omega,
\end{equation}
where  $u_\varep$ is a vector field in $\Omega$ and $\varep>0$  a small parameter.
Given $F, G\in L^2(\Omega;\br^3)$, it follows readily from the Lax-Milgram Theorem
 that the elliptic system (\ref{max-equation}) has a unique
(weak) solution in
\begin{equation}\label{V}
V^2_0(\Omega)=\big\{ u \in L^2(\Omega; \br^3): \ \nabla \times u \in L^2(\Omega;\br^3) \text{ and } n \times u =0 \text{ on }
\partial\Omega \big\}.
\end{equation}
Moreover, the solution satisfies the estimate
\begin{equation}\label{L-2-estimate}
\| u_\varep \|_{L^2(\Omega)} +\|\nabla\times u_\varep \|_{L^2(\Omega)}
\le C \left\{  \| F\|_{L^2(\Omega)} +\| G\|_{L^2(\Omega)} \right\},
\end{equation}
where $C$ depends only on $\mu$ and $\Omega$.
Suppose, in addition, that  the matrices $A(y)$ and $B(y)$
are periodic with respect to $\mathbb{Z}^3$:
\begin{equation}\label{periodicity}
A(y+z)=A(y) \quad \text{ and } \quad B(y+z)=B(y) \quad \text{ for any } y\in \br^3, z\in \mathbb{Z}^3.
\end{equation}
It follows from the theory of homogenization that
 $u_\varep \to u_0$ weakly in $V^2_0(\Omega)$ as $\varep\to 0$, and $u_0$ is the unique solution in $V^2_0(\Omega)$ of
 the homogenized system:
 \begin{equation}\label{max-equation-0}
\nabla \times \big(A_0 \nabla \times u_0\big) + B_0 u_0  = F +\nabla \times G  \quad  \text{ in } \Omega,\\
\end{equation}
where $A_0$ and $B_0$ are constant (effective) matrices given by
$$
A_0 =  \big( \mathcal{H}(A^{-1})\big)^{-1} \quad \text{ and } \quad B_0 =\mathcal{H} (B).
$$
We refer the reader to \cite[pp.81-91]{Bensoussan-1978} for the definition of $A_0$ and $B_0$
as well as the homogenization theory for (\ref{max-equation}).

In this paper we consider the boundary value problem for the elliptic system (\ref{max-equation}):
\begin{equation}\label{BVP}
\left\{
\aligned
\nabla \times \big(A(x/\varep) \nabla \times u_\varep\big) + B(x/\varep)u_\varep  & = F +\nabla \times G  &\quad&  \text{ in } \Omega,\\
n\times u_\varep & = f &\quad &\text{ on } \partial\Omega.
\endaligned
\right.
\end{equation}
We shall be interested in the estimates of $u_\varep$ and $\nabla \times u_\varep$, which are uniform in $\varep>0$,
in $L^p(\Omega)$ for $1<p\le \infty$, under the ellipticity and periodicity conditions on $A$ and $B$.

For $1<p<\infty$, let
\begin{equation}\label{V-p}
V^p(\Omega) =\big\{ u\in L^p(\Omega; \br^3): \, \nabla \times u\in L^p(\Omega;\br^3) \big\}.
\end{equation}
If $f\in L^p(\partial\Omega;\br^3)$ and $n\cdot f=0$ on $\partial\Omega$,
we will use Div$(f)$ to denote the surface
divergence of $f$ on $\partial\Omega$, defined by
\begin{equation}\label{surface}
<\text{Div} (f), \psi>_{W^{-1,p}(\partial\Omega)\times W^{1,p^\prime}(\partial\Omega)}
 =-\int_{\partial\Omega} <f, \nabla_{\tan} \psi>\, d\sigma,
\end{equation}
where $\psi\in C^\infty(\br^d)$ and $\nabla_{\tan} \psi =\nabla \psi -<\nabla\psi, n> n$  denotes the
tangential gradient of $\psi$ on $\partial\Omega$.
The following are the main results of the paper.

\begin{thm}\label{theorem-A}
Let $1<p<\infty$ and $\Omega$ be a bounded, simply connected, $C^{1, \alpha}$ domain in $\br^3$ with connected boundary.
Suppose that $A$ and $B$ satisfy conditions (\ref{ellipticity}) and (\ref{periodicity}) and
that $A, B$ are H\"older continuous.
Let $F, G\in L^p(\Omega; \br^3)$ and $f\in L^p(\partial\Omega;\br^3)$ with
$n\cdot f=0$ on $\partial\Omega$ and \text{\rm Div}$(f)\in W^{-\frac{1}{p},p}(\partial\Omega)$.
Then the boundary value problem (\ref{BVP}) has a unique solution in $V^p(\Omega)$.
Moreover, the solution $u_\varep$ satisfies
\begin{equation}\label{L-p-estimate}
\aligned
 \|u_\varep\|_{L^p(\Omega)}
& +\|\nabla\times u_\varep\|_{L^p(\Omega)}\\
&\le C_p\left\{ \| F\|_{L^p(\Omega)} +\|G\|_{L^p(\Omega)}
+ \| f\|_{L^p(\partial\Omega)} +\|\text{\rm Div}(f)\|_{W^{-\frac{1}{p}, p} (\partial\Omega)} \right\},
\endaligned
\end{equation}
where the constant $C_p$ is independent of $\varep>0$.
\end{thm}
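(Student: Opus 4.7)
\textit{Proof plan.}
The plan is to reduce the Maxwell-type boundary value problem to scalar second-order elliptic equations in divergence form with periodic coefficients, for which the uniform $W^{1,p}$ estimate of Avellaneda--Lin and Shen applies (this is precisely where the H\"older continuity of $A,B$ and the $C^{1,\alpha}$ regularity of $\partial\Omega$ are used). Throughout, $C$ denotes a constant independent of $\varepsilon$.

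I first reduce to the case $f=0$. The conditions $n\cdot f=0$ and $\text{Div}(f)\in W^{-1/p,p}(\partial\Omega)$ are exactly what is required to construct an $\varepsilon$-independent lifting $\Phi\in V^p(\Omega)$ with $n\times\Phi=f$ on $\partial\Omega$ and
\[
\|\Phi\|_{L^p(\Omega)}+\|\nabla\times\Phi\|_{L^p(\Omega)}\le C\big(\|f\|_{L^p(\partial\Omega)}+\|\text{Div}(f)\|_{W^{-1/p,p}(\partial\Omega)}\big).
\]
Subtracting $\Phi$ from $u_\varepsilon$ reduces to homogeneous boundary data, changing $F$ and $G$ by terms of controlled $L^p$ norm; so I may assume $f=0$.

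Next I perform a Helmholtz decomposition adapted to $B$. Taking the divergence of (\ref{max-equation}) gives $\nabla\cdot(B(x/\varepsilon)u_\varepsilon)=\nabla\cdot F$ in $\Omega$, so I let $p_\varepsilon$ solve the scalar Dirichlet problem
\[
\nabla\cdot\big(B(x/\varepsilon)\nabla p_\varepsilon\big)=\nabla\cdot F\ \text{ in }\Omega,\qquad p_\varepsilon=0\ \text{ on }\partial\Omega.
\]
The uniform $W^{1,p}$ estimate for scalar equations with H\"older periodic coefficients in a $C^{1,\alpha}$ domain yields $\|\nabla p_\varepsilon\|_{L^p(\Omega)}\le C\|F\|_{L^p(\Omega)}$. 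Setting $w_\varepsilon = u_\varepsilon-\nabla p_\varepsilon$, the remainder lies in $V^p(\Omega)$ and satisfies $n\times w_\varepsilon=0$ on $\partial\Omega$ (since $\partial\Omega$ is connected and $p_\varepsilon=0$ there), $\nabla\cdot(B(x/\varepsilon)w_\varepsilon)=0$ in $\Omega$, and the Maxwell equation with right-hand side $\tilde F+\nabla\times G$, where $\tilde F := F - B(x/\varepsilon)\nabla p_\varepsilon$ is (standard) divergence-free.

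The remaining step is to bound $w_\varepsilon$ in $V^p(\Omega)$. Because $\Omega$ is simply connected with connected boundary and $\nabla\cdot(B(x/\varepsilon)w_\varepsilon)=0$ with $n\times w_\varepsilon=0$, one may introduce a vector potential for $B(x/\varepsilon)w_\varepsilon$ (equivalently, represent $w_\varepsilon$ through auxiliary scalar/vector potentials) whose components satisfy divergence-form scalar equations of the same periodic-coefficient type. A second application of the uniform $W^{1,p}$ estimate then controls these potentials, and hence $w_\varepsilon$ and $\nabla\times w_\varepsilon$, in $L^p(\Omega)$. Combined with the previous two steps this yields (\ref{L-p-estimate}); existence and uniqueness in $V^p(\Omega)$ then follow from the a priori estimate by a standard continuity argument starting from the $L^2$ theory (\ref{L-2-estimate}). \textbf{The main obstacle} is this last step: the potential construction must be carried out so that the resulting scalar problems genuinely fit the Avellaneda--Lin--Shen framework (H\"older periodic coefficients on a $C^{1,\alpha}$ domain with admissible boundary data), while tracking the boundary traces produced by the topology of $\Omega$ and the condition $n\times w_\varepsilon=0$.
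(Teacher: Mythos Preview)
Your Steps 1--3 are reasonable and close in spirit to the paper (the lifting in Step~1 is exactly what the paper does). But Step~4, which you yourself flag as ``the main obstacle,'' is not a detail to be filled in later: it is essentially the entire proof, and as written it does not work. Two concrete issues.

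First, the assertion that a vector potential for $B(x/\varepsilon)w_\varepsilon$ has ``components satisfying divergence-form scalar equations of the same periodic-coefficient type'' is not correct as stated. A vector potential $\psi_\varepsilon$ with $\nabla\times\psi_\varepsilon=B(x/\varepsilon)w_\varepsilon$ satisfies no such equation componentwise. The paper's mechanism is different: after writing $B(x/\varepsilon)u_\varepsilon-F=\nabla\times h_\varepsilon$ (Theorem~\ref{lemma-2.1}), the Maxwell equation gives $\nabla\times\big(A(x/\varepsilon)\nabla\times u_\varepsilon+h_\varepsilon-G\big)=0$, and simple connectedness produces a \emph{scalar} potential $P_\varepsilon$ with $\nabla P_\varepsilon=A(x/\varepsilon)\nabla\times u_\varepsilon+h_\varepsilon-G$. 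This $P_\varepsilon$ solves a \emph{Neumann} problem for $\text{div}\big(A^{-1}(x/\varepsilon)\nabla\,\cdot\,\big)$, because $n\cdot\nabla\times u_\varepsilon=-\text{Div}(n\times u_\varepsilon)=0$. A second, independent reduction (Lemma~\ref{lemma-3.2}) writes $u_\varepsilon-h_\varepsilon=\nabla Q_\varepsilon$ where $h_\varepsilon$ is now a vector potential for $\nabla\times u_\varepsilon$, and $Q_\varepsilon$ solves a \emph{Dirichlet} problem for $\text{div}\big(B(x/\varepsilon)\nabla\,\cdot\,\big)$. Both a Neumann estimate (for $A^{-1}$) and a Dirichlet estimate (for $B$) are needed; your outline mentions neither.

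Second, even once the correct scalar problems are identified, a one-shot $L^p$ argument is impossible: the vector-potential step $L^q\to W^{1,q}\hookrightarrow L^p$ (with $1/p=1/q-1/3$) gains only one Sobolev exponent, so Lemma~\ref{lemma-3.1} bounds $\|\nabla\times u_\varepsilon\|_{L^p}$ by $\|u_\varepsilon\|_{L^q}$ and Lemma~\ref{lemma-3.2} bounds $\|u_\varepsilon\|_{L^p}$ by $\|\nabla\times u_\varepsilon\|_{L^q}$. One must \emph{bootstrap} from the $L^2$ energy estimate, first to $2<p\le 6$, then to $p>6$. For $1<p<2$ the paper uses duality against the adjoint system. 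Your ``standard continuity argument starting from the $L^2$ theory'' does not supply either the bootstrap or the duality. Incidentally, your Helmholtz splitting $u_\varepsilon=\nabla p_\varepsilon+w_\varepsilon$ in Steps~2--3 is harmless but not used in the paper and does not shortcut the remaining work: $w_\varepsilon$ still satisfies a Maxwell-type system of exactly the same form, so you are back where you started.
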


\begin{thm}\label{theorem-B}
Let $\Omega$ be a bounded, simply connected, $C^{1, \alpha}$ domain in $\br^3$ with connected boundary.
Suppose that $A$ and $B$  satisfy conditions (\ref{ellipticity}) and (\ref{periodicity}) and
that $A$, $B$ are H\"older continuous. Also assume that $A$ is symmetric.
Let $F, G\in C^\gamma (\Omega;\br^3)$ and $f\in C^\gamma (\partial\Omega;\br^3)$ with $n\cdot f=0$ on $\partial\Omega$
and \text{\rm Div}$(f)\in C^\gamma (\partial\Omega)$ for some $\gamma>0$.
Let $u_\varep$ be the unique solution of (\ref{BVP}) in $V^2 (\Omega)$.
Then $u_\varep, \nabla \times u_\varep \in  L^\infty(\Omega;\br^3)$, and
\begin{equation}\label{L-infty-estimate}
\aligned
 \|u_\varep\|_{L^\infty(\Omega)}
& +\|\nabla\times u_\varep\|_{L^\infty(\Omega)}\\
&\le C_\gamma \left\{ \| F\|_{C^\gamma(\Omega)} +\|G\|_{C^\gamma(\Omega)}
+ \| f\|_{C^\gamma (\partial\Omega)} +\|\text{\rm Div}(f)\|_{C^\gamma(\partial\Omega)} \right\},
\endaligned
\end{equation}
where the constant $C_\gamma $ is independent of $\varep>0$.
\end{thm}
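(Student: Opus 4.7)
The plan is to reduce the uniform $L^\infty$ estimate to Avellaneda--Lin type boundary Lipschitz estimates for two scalar elliptic equations in divergence form with oscillating periodic coefficients. As a preliminary step, Theorem~\ref{theorem-A} gives $\|u_\varep\|_{L^p(\Omega)} + \|\nabla\times u_\varep\|_{L^p(\Omega)}\le C$ uniformly in $\varep$ for a fixed large $p<\infty$, which will endow the auxiliary potentials appearing below with uniform H\"older regularity via Sobolev embedding.

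To bound $\nabla\times u_\varep$, I would exploit that taking the divergence of (\ref{max-equation}) gives $\text{div}(Bu_\varep - F) = 0$. Because $\Omega$ is simply connected with connected boundary, there exists a Helmholtz vector potential $W_\varep$ with $\nabla\times W_\varep = Bu_\varep - F$, $\text{div}\,W_\varep = 0$ in $\Omega$, and $n\cdot W_\varep = 0$ on $\partial\Omega$; standard $W^{1,p}$ elliptic theory combined with Sobolev embedding yields $W_\varep\in C^{0,\beta}(\overline\Omega)$ uniformly in $\varep$ for some $\beta>0$. Substituting into (\ref{max-equation}) gives $\nabla\times(A\nabla\times u_\varep + W_\varep - G) = 0$, so simple connectivity produces a scalar $\Phi_\varep$ with $A\nabla\times u_\varep = \nabla\Phi_\varep + G - W_\varep$. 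The identity $\text{div}(\nabla\times u_\varep) = 0$ in $\Omega$ and the surface identity $n\cdot(\nabla\times u_\varep) = \text{Div}(f)$ on $\partial\Omega$ then translate into a scalar Neumann problem
\[
-\text{div}\bigl(A^{-1}(x/\varep)\nabla\Phi_\varep\bigr) = \text{div}\bigl(A^{-1}(x/\varep)(G - W_\varep)\bigr)\quad\text{in } \Omega,
\]
with conormal boundary data $n\cdot A^{-1}(x/\varep)\nabla\Phi_\varep = \text{Div}(f) - n\cdot A^{-1}(x/\varep)(G - W_\varep)$ on $\partial\Omega$. Since $A$, and hence $A^{-1}$, is symmetric, periodic, and H\"older continuous, the Avellaneda--Lin boundary Lipschitz estimate for scalar Neumann problems on $C^{1,\alpha}$ domains yields $\|\nabla\Phi_\varep\|_{L^\infty}\le C$ uniformly in $\varep$, whence $\|\nabla\times u_\varep\|_{L^\infty(\Omega)}\le C$.

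For $u_\varep$ itself, the plan is a second scalar reduction. Let $\phi_\varep$ solve the scalar Dirichlet problem $-\text{div}(B(x/\varep)\nabla\phi_\varep) = -\text{div}\,F$ in $\Omega$ with $\phi_\varep = 0$ on $\partial\Omega$, so that the scalar Lipschitz estimate gives $\|\nabla\phi_\varep\|_{L^\infty}\le C\|F\|_{C^\gamma}$. Setting $w_\varep = u_\varep - \nabla\phi_\varep$, one checks $\nabla\times w_\varep = \nabla\times u_\varep\in L^\infty$, $n\times w_\varep = f$ on $\partial\Omega$, and $\text{div}(Bw_\varep) = 0$ in $\Omega$; a second Helmholtz-type construction, introducing a vector potential for $Bw_\varep$ and then a scalar potential exactly as in the first part---but now with coefficient $B$ in place of $A^{-1}$---together with the Lipschitz estimate for the resulting scalar equation yields $\|w_\varep\|_{L^\infty}\le C$, and hence $\|u_\varep\|_{L^\infty}\le \|w_\varep\|_{L^\infty} + \|\nabla\phi_\varep\|_{L^\infty}\le C$. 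The main technical obstacle lies in this second reduction: the ``divergence-free'' constraint on $w_\varep$ is expressed in terms of the oscillating matrix $B(x/\varep)$ rather than the Euclidean divergence, so the associated vector potential does not gain uniform $C^{1}$-regularity by a routine Sobolev embedding and one must carefully combine the Avellaneda--Lin Lipschitz estimates with the $L^\infty$ bound on $\nabla\times u_\varep$ obtained in the previous step in order to close the argument.
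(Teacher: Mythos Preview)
Your reduction for $\nabla\times u_\varep$ is essentially the paper's argument (Lemma~\ref{lemma-3.1} and Remark~\ref{remark-3.1}): a vector potential for the Euclidean-divergence-free field $B(x/\varep)u_\varep-F$, then a scalar potential $\Phi_\varep$ satisfying a Neumann problem for $-\text{div}(A^{-1}(x/\varep)\nabla\,\cdot\,)$, and finally the uniform Lipschitz estimate for that Neumann problem. (A small attribution point: that Neumann Lipschitz estimate is the Kenig--Lin--Shen result quoted as Theorem~\ref{theorem-2.4}, not Avellaneda--Lin, and is where the symmetry of $A$ is actually used.)

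The gap is in your bound for $u_\varep$. The obstacle you flag at the end is real, and your sketch does not get past it. A vector potential $V_\varep$ for $B(x/\varep)w_\varep$ satisfies only $\|V_\varep\|_{W^{1,q}}\le C\|w_\varep\|_{L^q}$; recovering $w_\varep=B^{-1}(x/\varep)\nabla\times V_\varep$ in $L^\infty$ would require a uniform $C^1$ bound on $V_\varep$, which fails precisely because $B(x/\varep)w_\varep$ cannot be uniformly H\"older as $\varep\to 0$. Nor does a scalar reduction ``exactly as in the first part'' arise from this construction: there is no curl-free combination to extract once you have written $Bw_\varep=\nabla\times V_\varep$.

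The paper sidesteps this by taking the vector potential for a \emph{different} divergence-free field, one with no oscillating coefficient in front. Since $\text{div}(\nabla\times u_\varep)=0$ in the ordinary Euclidean sense, Theorem~\ref{lemma-2.1} produces $h_\varep\in W^{1,q}(\Omega;\br^3)$ with $\nabla\times h_\varep=\nabla\times u_\varep$ and $\|h_\varep\|_{W^{1,q}}\le C\|\nabla\times u_\varep\|_{L^q}$, hence $h_\varep\in C^\gamma(\overline\Omega)$ uniformly by Sobolev. Then $u_\varep-h_\varep=\nabla Q_\varep$ for a scalar $Q_\varep$, and the equation $\text{div}(B(x/\varep)u_\varep)=\text{div}(F)$ becomes $\text{div}\{B(x/\varep)(\nabla Q_\varep+h_\varep)\}=\text{div}(F)$, while $n\times u_\varep=f$ fixes the tangential gradient and hence the Dirichlet trace of $Q_\varep$. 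Now the oscillating matrix $B$ appears only as the coefficient of a scalar divergence-form operator, and the Dirichlet Lipschitz estimate (Theorem~\ref{theorem-2.2}) gives $\|\nabla Q_\varep\|_{L^\infty}\le C$ directly; see Lemma~\ref{lemma-3.2} and Remark~\ref{remark-3.2}. With this decomposition your preliminary subtraction of $\nabla\phi_\varep$ is unnecessary: the term $\text{div}(F)$ is absorbed by Theorem~\ref{theorem-2.2} without preprocessing.
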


Besides the interest in their own rights,
uniform regularity estimates are an important tool in  the study of convergence problems for solutions $u_\varep$,
eigenfunctions, and eigenvalues in the theory of homogenization.
For  the elliptic systems
\begin{equation}\label{elliptic-system}
-\text{\rm div} \big( A(x/\varep)\nabla u_\varep \big) =F \quad \text{ in } \Omega,
\end{equation}
where $A(y) =\big( a_{ij}^{\alpha\beta}(y)\big)$ with $1\le i, j\le d$ and $1\le \alpha, \beta\le m$
is uniform elliptic, periodic, and H\"older continuous,
uniform $W^{1,p}$ estimates, H\"older estimates, and Lipschitz estimates
were established in \cite{AL-1987} \cite{AL-1991} for solutions in $C^{1,\alpha}$ domains
with the Dirichlet boundary condition.
Analogous results for solutions in $C^{1,\alpha}$ domains with the Neumann boundary conditions
were recently obtained in \cite{kls1}. We mention that for suitable solutions of
$\text{div} \big(A(x/\varep)\nabla u_\varep \big) =0$ in a Lipschitz domain $\Omega$,
under the additional symmetry condition
$a_{ij}^{\alpha\beta} (y)=a_{ji}^{\beta\alpha} (y)$,
the following
uniform $L^2$ Rellich estimates:
\begin{equation}\label{Rellich-estimate}
\left\|\frac{\partial u_\varep}{\partial \nu_\varep} \right\|_{L^2(\partial\Omega)}
\approx \| \nabla_{\tan} u_\varep\|_{L^2(\partial\Omega)}
\end{equation}
were proved in \cite{ks1} \cite{ks2}, where $\partial u_\varep/\partial\nu_\varep$ and $\nabla_{\tan} u_\varep$
denote the conormal derivative  and tangential gradient of $u_\varep$ on $\partial\Omega$, respectively.
The proof for the Lipschitz estimates
in \cite{kls1} relies on the $L^2$ Relllich estimates in \cite{ks2}. As a result, the Lipschitz
estimates in \cite{kls1} for solutions with the Neumann boundary conditions, which
are used in the proof of Theorem \ref{theorem-B}, were established under the
additional symmetry condition.

To prove Theorems \ref{theorem-A} and \ref{theorem-B}, our basic idea is to reduce the study of (\ref{max-equation})
to that of a scalar uniform elliptic equation of divergence form.
This uses the well-known fact that on a simply connected domain $\Omega$ in $\br^3$, $u\in L^2(\Omega;\br^3)$ and
$\nabla \times u=0$ in $\Omega$ imply that $u=\nabla P$ in $\Omega$ for some
scalar function $P\in H^1(\Omega)$. It also relies on the fact that on a bounded $C^1$ domain $\Omega$ with connected boundary,
$u\in L^p(\Omega;\br^3)$ and div$(u)=0$ in $\Omega$  imply that $u=\nabla\times v$ in $\Omega$ for some $v\in W^{1, p}(\Omega;\br^3)$.
The approach allows us to reduce the estimates (\ref{L-p-estimate}) and (\ref{L-infty-estimate})
to the $W^{1,p}$ and Lipschitz estimates for solutions of the scalar elliptic equation
\begin {equation}\label{elliptic-equation-plus}
-\text{\rm div} \big( A(x/\varep)( \nabla w_\varep +g) \big) =F \quad \text{ in } \Omega.
\end{equation}
We point out that both the Dirichlet condition and the Neumann condition for
the elliptic equation (\ref{elliptic-equation-plus}) are needed to handle the system (\ref{max-equation}).

The rest of the paper is organized as follows.
In Section 2 we collect some basic facts related to the divergence and curl operators,
which will be needed in Section 4.
In Section 3 we establish the $W^{1,p}$ and Lipschitz estimates for (\ref{elliptic-equation-plus}) in
a bounded $C^{1, \alpha}$ domain.
While the $W^{1, p}$ estimates for (\ref{elliptic-equation-plus}) follow readily from those for
(\ref{elliptic-system})  with $m=1$ in \cite{AL-1987, AL-1991}  and \cite{kls1},
the desired Lipschitz estimates require some additional argument, involving the Green and Neumann functions for (\ref{elliptic-system}).
The proof of Theorem \ref{theorem-A} is given in Section 4, and  the proof of Theorem \ref{theorem-B} in Section 5.
Finally, we point out that under the additional assumption that
$A$ is Lipschitz continuous, $B$ is a constant matrix, and $\Omega$ is
$C^{1,1}$, it follows from the estimate (\ref{L-p-estimate}) that
\begin{equation}\label{L-p-estimate-1}
\|u_\varep\|_{W^{1,p}(\Omega)}
\le
C_p\left\{ \| F\|_{L^p(\Omega)} +\|\text{\rm div} (F)\|_{L^p(\Omega)}
+\|G\|_{L^p(\Omega)}
+ \| f\|_{W^{1-\frac{1}{p}, p} (\partial\Omega)} \right\}
\end{equation}
for $1<p<\infty$ (see Remark \ref{remark-1.0}).


\section{Some preliminaries}

The materials in this section are more or less known.

\begin{thm}\label{curl-theorem}
Let $\Omega$ be a bounded, simply-connected, Lipschitz domain in $\br^3$.
Suppose that $u\in L^p (\Omega;\br^3)$ for some $1<p<\infty$ and $\nabla\times u=0$ in $\Omega$.
Then $u=\nabla P$ in $\Omega$ for some $P\in W^{1,p}(\Omega)$.
\end{thm}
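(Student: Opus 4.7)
My plan is to build $P$ locally via mollification and the smooth Poincar\'e lemma, then pass to the limit and upgrade to a global $W^{1,p}$ function.

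First, I would exhaust $\Omega$ by a nested family of smooth, simply connected open sets $\Omega_k$ with $\overline{\Omega_k}\subset \Omega_{k+1}\subset\subset \Omega$ and $\bigcup_k \Omega_k=\Omega$. For each $k$ and for $\delta<\mathrm{dist}(\Omega_k,\partial\Omega)$, the mollification $u_\delta:=\rho_\delta\ast u$ is smooth on $\Omega_k$, and since convolution commutes with $\nabla\times$ it satisfies $\nabla\times u_\delta=0$ there. The classical (smooth) Poincar\'e lemma on the simply connected smooth set $\Omega_k$ produces $P_\delta^{(k)}\in C^\infty(\Omega_k)$ with $\nabla P_\delta^{(k)}=u_\delta$, and I normalize by fixing a ball $B_0\subset \Omega_1$ and requiring $\int_{B_0}P_\delta^{(k)}\,dx=0$.

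The Poincar\'e--Wirtinger inequality on $\Omega_k$ then yields a uniform bound $\|P_\delta^{(k)}\|_{W^{1,p}(\Omega_k)}\le C_k\|u\|_{L^p(\Omega)}$. Passing to a weakly convergent subsequence as $\delta\to 0$ gives $P^{(k)}\in W^{1,p}(\Omega_k)$ with $\nabla P^{(k)}=u$ on $\Omega_k$; uniqueness up to additive constants, combined with the common normalization on $B_0$, makes the $P^{(k)}$ consistent across $k$ and patches them into a single $P\in W^{1,p}_{\mathrm{loc}}(\Omega)$ with $\nabla P=u$ in $\Omega$. The Poincar\'e inequality on the Lipschitz domain $\Omega$ itself (applicable after renormalizing so that $\int_{B_0} P\,dx=0$) then gives $\|P\|_{L^p(\Omega)}\le C\|u\|_{L^p(\Omega)}$, so $P\in W^{1,p}(\Omega)$.

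The main obstacle I anticipate is the topological step: producing the interior exhaustion by \emph{simply connected} smooth subdomains $\Omega_k$. For a Lipschitz simply connected $\Omega$ this can be done by using a bi-Lipschitz collar of $\partial\Omega$ to define $\Omega_k$ as a smooth approximation of the level set $\{x\in\Omega:d(x,\partial\Omega)>1/k\}$ and then verifying that each $\Omega_k$ deformation retracts onto $\Omega$, so is simply connected for large $k$. A route that sidesteps this geometric construction is a duality argument: by Hahn--Banach, $u$ is an $L^p$-gradient if and only if $\int_\Omega u\cdot \psi\,dx=0$ for every divergence-free $\psi\in L^{p'}(\Omega;\br^3)$ with $\psi\cdot n=0$ on $\partial\Omega$; on a simply connected domain such $\psi$ are $L^{p'}$-approximable by fields $\nabla\times \phi$ with $\phi\in C_c^\infty(\Omega;\br^3)$, and for these the hypothesis $\nabla\times u=0$ immediately gives $\int_\Omega u\cdot \nabla\times\phi\,dx=0$.
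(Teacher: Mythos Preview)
Your mollification-and-exhaustion argument is correct and considerably more detailed than the paper's own treatment, which simply observes that the case $p\ge 2$ reduces to the well-known $p=2$ result (since $L^p(\Omega)\subset L^2(\Omega)$ on a bounded domain, and once $u=\nabla P$ with $P\in H^1(\Omega)$ one has $\nabla P=u\in L^p$, hence $P\in W^{1,p}(\Omega)$ by Poincar\'e on the Lipschitz domain), and for $p<2$ refers the reader to the analogous argument in Girault--Raviart. So your route is genuinely different in spirit: it is self-contained and treats all $1<p<\infty$ uniformly, whereas the paper leans entirely on the literature.

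Two remarks. First, in the topological step your phrase ``$\Omega_k$ deformation retracts onto $\Omega$'' is inverted; what you need is that $\Omega$ deformation retracts onto $\Omega_k$ via the bi-Lipschitz collar, so that the inclusion $\Omega_k\hookrightarrow\Omega$ is a homotopy equivalence and $\pi_1(\Omega_k)\cong\pi_1(\Omega)=0$ for large $k$. Second, your proposed duality alternative does not actually sidestep the difficulty: the claim that every divergence-free $\psi\in L^{p'}(\Omega;\br^3)$ with $\psi\cdot n=0$ is an $L^{p'}$-limit of fields $\nabla\times\phi$ with $\phi\in C_c^\infty(\Omega;\br^3)$ is precisely the dual of the statement you are proving. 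Indeed, the annihilator in $L^p$ of $\{\nabla\times\phi:\phi\in C_c^\infty\}$ is by definition the set of curl-free fields, so the density you assert is equivalent, via Hahn--Banach, to ``curl-free $\Rightarrow$ gradient''. Establishing it independently would require the same topological input you were trying to avoid. Your first route is the honest one.
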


\begin{proof}
The case $p>2$ follows directly from the case $p=2$, which is well known.
The case $p<2$ may be proved in the same manner as in the case $p=2$
(see e.g. \cite[pp.31-32]{Girault-Raviart}).
\end{proof}

We will use $W^{t, p}(\partial\Omega)$ to denote the Sobolev-Besov space of order $t$ and exponent $p$ on $\partial\Omega$
for $-1<t<1$ and $1<p<\infty$. Note that the dual of $W^{t, p}(\partial\Omega)$ is given
by $W^{-t, q}(\partial\Omega)$, where $q=p^\prime=\frac{p}{p-1}$.

\begin{thm}\label{lemma-2.1}
Let $\Omega$ be a bounded $C^1$ domain in $\br^3$ with connected boundary.
Let $ g\in L^p(\Omega; \br^3)$ for some
$1<p<\infty$.
Suppose that $\text{\rm div} (g) =0$ in $\Omega$.
Then there exists $h\in W^{1, p}(\Omega; \br^3)$ such that
$\nabla \times h=g$ in $\Omega$. Moreover, $\text{\rm div} (h)=0$ in $\Omega$ and
$\| h\|_{W^{1,p} (\Omega)} \le C_p\, \| g\|_{L^p(\Omega)}$,
where $C_p$ depends only on $p$ and $\Omega$.
\end{thm}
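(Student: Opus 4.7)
The plan is to construct $h$ as the curl of a Newtonian (Biot--Savart type) potential of a divergence-free extension of $g$ to all of $\br^3$. The key step is producing that extension. Fix a ball $B\supset\overline\Omega$. Since $\text{\rm div}(g)=0$ in $\Omega$, the normal trace $g\cdot n$ lies in $W^{-1/p,p}(\partial\Omega)$ with $\langle g\cdot n,1\rangle=\int_\Omega\text{\rm div}(g)\,dx=0$. Connectedness of $\partial\Omega$ ensures that $B\setminus\overline\Omega$ is connected, so the mixed Neumann problem
\[
\Delta\psi=0\ \text{in}\ B\setminus\overline\Omega,\qquad \partial_\nu\psi=-g\cdot n\ \text{on}\ \partial\Omega,\qquad \partial_\nu\psi=0\ \text{on}\ \partial B,
\]
(with $\nu$ the outward unit normal to $B\setminus\overline\Omega$, so $\nu=-n$ on $\partial\Omega$) is compatible and admits, by the $L^p$ theory for the Laplacian on $C^1$ domains, a solution with $\|\nabla\psi\|_{L^p(B\setminus\overline\Omega)}\le C\,\|g\|_{L^p(\Omega)}$. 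Setting $\widetilde g:=g\,\chi_\Omega+(\nabla\psi)\,\chi_{B\setminus\overline\Omega}$ and extending by zero outside $B$ produces an element of $L^p(\br^3;\br^3)$ of compact support, divergence-free in all of $\br^3$ (the normal trace matches across $\partial\Omega$ by construction and vanishes on $\partial B$), with $\|\widetilde g\|_{L^p(\br^3)}\le C\,\|g\|_{L^p(\Omega)}$.

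Next, let $N(x)=(4\pi|x|)^{-1}$ be the fundamental solution of $-\Delta$ on $\br^3$ and put $\phi:=N*\widetilde g$ componentwise, so that $-\Delta\phi=\widetilde g$. Since $\text{\rm div}(\widetilde g)=0$, also $\text{\rm div}(\phi)=N*\text{\rm div}(\widetilde g)=0$. Define $h:=\nabla\times\phi$. The identity $\nabla\times(\nabla\times\phi)=\nabla(\text{\rm div}\,\phi)-\Delta\phi$ then gives $\nabla\times h=-\Delta\phi=\widetilde g$, which equals $g$ on $\Omega$, while $\text{\rm div}(h)=\text{\rm div}(\nabla\times\phi)=0$ is automatic. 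Each component of $\nabla h$ is a linear combination of second-order Riesz transforms of components of $\widetilde g$, so the Calder\'on--Zygmund theorem yields $\|\nabla h\|_{L^p(\br^3)}\le C_p\,\|\widetilde g\|_{L^p(\br^3)}$. Since $\widetilde g$ has compact support and $h$ is the convolution of $\widetilde g$ with the locally integrable kernel $\nabla N$, a direct application of Young's inequality gives $\|h\|_{L^p(\Omega)}\le C\,\|\widetilde g\|_{L^p(\br^3)}$. Restricting to $\Omega$ then yields $\|h\|_{W^{1,p}(\Omega)}\le C_p\,\|g\|_{L^p(\Omega)}$, as required.

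I expect the only technically delicate point to be the $L^p$ Neumann estimate invoked in the extension step on the $C^1$ annular region $B\setminus\overline\Omega$; this is the classical layer-potential theory of Fabes--Jodeit--Rivi\`ere, and it is what forces the $C^1$ regularity hypothesis on $\partial\Omega$. Everything else in the argument is a formal manipulation of convolutions together with standard Calder\'on--Zygmund and Young inequalities.
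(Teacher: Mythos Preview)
Your argument is correct and is essentially the same as the paper's: both construct a divergence-free extension $\widetilde g$ of $g$ to $\br^3$ by solving a Neumann problem for the Laplacian on the $C^1$ annular region $B\setminus\overline\Omega$ (the paper cites \cite{Fabes-1998} for this, matching your reference to Fabes--Jodeit--Rivi\`ere), and both then set $h=\nabla\times(N*\widetilde g)$ and appeal to Calder\'on--Zygmund theory together with a fractional/Young-type bound for the lower-order term. The only cosmetic difference is that the paper isolates the compactly-supported Biot--Savart step as a separate preliminary observation before doing the extension, whereas you run the two steps in the opposite order.
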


\begin{proof}
The result is well known for smooth domains. The proof for the case of $C^1$ domains
is similar. We provide a proof, which follows the lines in \cite{Girault-Raviart} and \cite{Costabel-1990},
 for the sake of completeness.

We first note that if $u\in L^p(\br^3;\br^3)$ with supp$(u)\subset B=B(0,R)$ and $\text{div} (u)=0$ in $\br^3$,
then there exists $v\in W^{1, p}(B;\br^3)$ such that $\nabla \times v =u$ in $B$,
$\text{div} (v)=0$ in $B$, and $\| v\|_{W^{1,p}(B)} \le C_p \, \| u\|_{L^p(\br^3)}$.
To see this, we let $v=\nabla \times w$, where
$$
w(x)=\int_{\br^3} \Gamma (x-y) u(y)\, dy,
$$
and $\Gamma (x)=(4\pi |x|)^{-1}$ is the fundamental solution for $-\Delta$ in $\br^3$, with pole at the origin.
It follows from $\text{div} (u)=0$ in $\br^3$ that $\text{div} (w)=0$ in $\br^3$. Hence,
$$
\nabla \times v =\nabla \times (\nabla \times w)
=-\Delta w +\nabla (\text{div} (w) )=-\Delta w =u.
$$
Clearly, div$(v)=0$ in $\br^3$. Also, by the Calder\'on-Zygmund estimate and
fractional intergal estimate,
$$
\| v\|_{W^{1,p}(B)} \le C\, \|\nabla w\|_{W^{1,p}(B)} \le C_p \, \| u\|_{L^p(\br^3)},
$$
where $C_p$ may depend on $R$.

We now consider the case where $\Omega$ is a bounded $C^1$ domain with connected boundary.
Choose a ball $B=B(0, R)$ such that $\overline{\Omega} \subset B(0, R/4)$.
Since $\partial\Omega$ is connected, $B\setminus \overline{\Omega}$ is a bounded (connected) $C^1$ domain.
Also, $g\in L^p(\Omega;\br^3)$ and div$(g)=0$ in $\Omega$ imply that $n\cdot g\in W^{-\frac{1}{p}, p} (\partial\Omega)$ and
$$
<n\cdot g, 1>_{W^{-\frac{1}{p}, p}(\partial\Omega)\times W^{\frac{1}{p}, p^\prime}(\partial\Omega)}=0.
$$
 It follows from \cite{Fabes-1998} that there exists $f\in W^{1, p}(B\setminus \overline{\Omega})$ such that
$\Delta f =0 $ in $B\setminus \overline{\Omega}$,
$\frac{\partial f}{\partial n} = n\cdot g$ on $\partial\Omega$, and $\frac{\partial f}{\partial n}=0$ on $\partial B$.
Moreover,
 \begin{equation}\label{2.1.1}
 \|\nabla f\|_{L^p(B\setminus \overline{\Omega})} \le C_p \, \| n\cdot g\|_{W^{-\frac{1}{p}, p} (\partial\Omega)}
 \le C_p\, \| g\|_{L^p(\Omega)}.
 \end{equation}
Define
$$
\widetilde{g}=\left\{ \aligned
& g & \quad & \text{ in } \Omega,\\
& \nabla f & \quad & \text{ in } B \setminus \overline{\Omega},\\
& 0 & \quad & \text{ in } \br^3\setminus B.
\endaligned
\right.
$$
Note that $\widetilde{g}\in L^p(\br^3;\br^3)$ and for any $\psi\in C_0^\infty(\br^3)$,
$$
\aligned
\int_{\br^3} \widetilde{g}\cdot \nabla \psi \, dx
& =\int_{\Omega} g\cdot \nabla \psi \, dx + \int_{B\setminus \overline{\Omega}} \nabla f\cdot \nabla \psi\, dx\\
&=<n\cdot g, \psi>_{W^{-\frac{1}{p}, p} (\partial\Omega)\times W^{\frac{1}{p}, p^\prime} (\partial\Omega)}
+\int_{B\setminus \overline{\Omega}} \nabla f\cdot \nabla \psi\, dx\\
&=0.
\endaligned
$$
Thus, div$(\widetilde{g})=0$ in $\br^3$.
It follows from the first part of the proof that $\widetilde{g}=\nabla\times h$ in $B$ for some $h\in W^{1, p}(B;\br^3)$ with
div$(h)=0$ in $B$. Furthermore,
$$
\| h\|_{W^{1,p}(\Omega)}\le \| h\|_{W^{1, p}(B)}
 \le C_p \, \| \widetilde{g}\|_{L^p(B)} \le C_p \, \| g\|_{L^p(\Omega)},
$$
where we have used (\ref{2.1.1}) for the last inequality. This competes the proof.
\end{proof}

\begin{thm}\label{gradient-theorem}
Let $1<p<\infty$ and $\Omega$ be a bounded, simply-connected, $C^{1,1}$ domain in $\br^3$
with connected boundary.
Let $A=A(x)$ be a $3\times 3$ matrix in $\br^3$ satisfying the ellipticity condition (\ref{ellipticity-1}).
Also assume that $A$ is Lipschitz continuous.
Then, for any $u\in L^p(\Omega;\br^3)$ such that the right hand side of (\ref{gradient-estimate}) is finite,
\begin{equation}\label{gradient-estimate}
\|\nabla u\|_{L^p(\Omega)}
 \le C_p \bigg\{
\|\nabla \times u\|_{L^p(\Omega)}
+\| \text{\rm div} (A u)\|_{L^p(\Omega)}
+\|n\times u\|_{W^{1-\frac{1}{p}, p} (\partial\Omega)} \bigg\},
\end{equation}
where $C_p$ depends only on $p$, $\Omega$, and $A$.
\end{thm}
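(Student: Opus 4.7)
The idea is a Helmholtz-type decomposition reducing (\ref{gradient-estimate}) to scalar elliptic regularity together with Theorem~\ref{lemma-2.1}.

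First I would reduce to the case $n\times u=0$ on $\partial\Omega$ by subtracting a $W^{1,p}(\Omega;\br^3)$ extension $U$ of the tangential trace: surjectivity of the tangential trace on $W^{1,p}$ provides such a $U$ with $\|U\|_{W^{1,p}(\Omega)}\le C\|n\times u\|_{W^{1-\frac{1}{p},p}(\partial\Omega)}$, and the additional contributions of $\nabla\times U$ and $\text{\rm div}(AU)$ are already controlled by the right-hand side of (\ref{gradient-estimate}). Next, let $P\in W^{1,p}_0(\Omega)$ solve the scalar Dirichlet problem
$$
-\text{\rm div}(A\nabla P)=-\text{\rm div}(Au)\quad\text{in }\Omega.
$$
Since $A$ is Lipschitz, $\Omega$ is $C^{1,1}$, and the right-hand side lies in $L^p(\Omega)$, classical $W^{2,p}$ elliptic regularity gives $\|\nabla P\|_{W^{1,p}(\Omega)}\le C_p\|\text{\rm div}(Au)\|_{L^p(\Omega)}$. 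Setting $w:=u-\nabla P\in L^p(\Omega;\br^3)$ yields $\text{\rm div}(Aw)=0$ and $\nabla\times w=\nabla\times u$ in $\Omega$, and $n\times w=0$ on $\partial\Omega$ (since $n\times\nabla P=\nabla_{\tan}P=0$ when $P|_{\partial\Omega}=0$).

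It remains to bound $\|\nabla w\|_{L^p(\Omega)}$. Because $\text{\rm div}(Aw)=0$ and $\partial\Omega$ is connected, Theorem~\ref{lemma-2.1} applied to $Aw$ produces $h\in W^{1,p}(\Omega;\br^3)$ with $\nabla\times h=Aw$, $\text{\rm div}\,h=0$, and $\|h\|_{W^{1,p}(\Omega)}\le C\|w\|_{L^p(\Omega)}$. Then $w=A^{-1}\nabla\times h$, so $\nabla\times w=\nabla\times u$ becomes the second-order system
$$
\nabla\times(A^{-1}\nabla\times h)=\nabla\times u\quad\text{in }\Omega,
$$
which, together with the gauge $\text{\rm div}\,h=0$ in $\Omega$ and the boundary conditions $n\cdot h=0$ (imposed as part of the gauge) and $n\times(A^{-1}\nabla\times h)=0$ (inherited from $n\times w=0$), should give $h\in W^{2,p}(\Omega)$ with $\|h\|_{W^{2,p}(\Omega)}\le C_p\{\|\nabla\times u\|_{L^p(\Omega)}+\|w\|_{L^p(\Omega)}\}$. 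Combined with $\nabla w=A^{-1}\nabla(\nabla\times h)+(\nabla A^{-1})(\nabla\times h)$, this yields the a priori bound
$$
\|\nabla w\|_{L^p(\Omega)}\le C_p\{\|\nabla\times u\|_{L^p(\Omega)}+\|w\|_{L^p(\Omega)}\}.
$$
The lower-order $\|w\|_{L^p}$ term is then absorbed by a standard compactness argument: if it could not be removed, Rellich compactness would produce a nontrivial $w$ with $\text{\rm div}(Aw)=0$, $\nabla\times w=0$, and $n\times w=0$; but since $\Omega$ is simply connected with connected boundary, such a $w$ is a gradient $\nabla Q$ whose scalar potential $Q$ solves a homogeneous Dirichlet problem for $\text{\rm div}(A\nabla Q)=0$, hence $w\equiv 0$, a contradiction.

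The main obstacle is the $W^{2,p}$ bootstrap of $h$. The principal symbol of the operator $h\mapsto\nabla\times(A^{-1}\nabla\times h)-\nabla\,\text{\rm div}\,h$ is elliptic (since $A^{-1}$ is positive definite), and the boundary conditions $(n\cdot h=0,\ n\times(A^{-1}\nabla\times h)=0)$ should be complementing in the Agmon--Douglis--Nirenberg sense. Verifying this complementing condition on a $C^{1,1}$ boundary with Lipschitz coefficient $A^{-1}$ --- or, alternatively, flattening the boundary locally and treating $A^{-1}-A^{-1}(x_0)$ as a perturbation of the constant-coefficient case, where the problem decouples into Dirichlet/Neumann problems for the scalar components of $h$ --- is the technical crux. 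The Lipschitz regularity of $A$ together with the $C^{1,1}$ regularity of $\partial\Omega$ ensures the perturbative lower-order terms are genuinely lower-order and can be absorbed.
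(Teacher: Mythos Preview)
Your approach differs substantially from the paper's, and the difference is exactly where your acknowledged ``main obstacle'' lies. You split $u=\nabla P+w$ by first solving a scalar Dirichlet problem to kill $\text{\rm div}(Au)$, and then try to control the remainder $w$ (with $\text{\rm div}(Aw)=0$, $\nabla\times w=\nabla\times u$, $n\times w=0$) through a $W^{2,p}$ estimate for the vector system $\nabla\times(A^{-1}\nabla\times h)=\nabla\times u$ with mixed boundary conditions. That vector $W^{2,p}$ estimate is not standard (as you concede), the condition $n\cdot h=0$ is not actually supplied by Theorem~\ref{lemma-2.1}, and you then need a further compactness step to absorb $\|w\|_{L^p}$. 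So the proposal is a plan with a genuine gap at its center.

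The paper sidesteps all of this by reversing the order of the decomposition. It applies Theorem~\ref{lemma-2.1} not to $Aw$ but directly to $g:=\nabla\times u$ (which is divergence-free), producing $h\in W^{1,p}(\Omega;\br^3)$ with $\nabla\times h=\nabla\times u$ and $\|h\|_{W^{1,p}}\le C\|\nabla\times u\|_{L^p}$. Then $u-h$ is curl-free on the simply connected domain, hence (Theorem~\ref{curl-theorem}) equals $\nabla P$ for a scalar $P$. Now $\text{\rm div}(A\nabla P)=\text{\rm div}(Au)-\text{\rm div}(Ah)\in L^p$, and the Dirichlet data for $P$ is controlled via $n\times\nabla P=n\times u-n\times h$, both terms lying in $W^{1-\frac{1}{p},p}(\partial\Omega)$. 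A single scalar $W^{2,p}$ estimate on the $C^{1,1}$ domain then finishes the proof---no ADN verification for a vector system, no compactness absorption. The moral: feed the \emph{curl} into Theorem~\ref{lemma-2.1}, not $Au$; this reduces the whole theorem to one scalar elliptic problem instead of a coupled system.
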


\begin{proof}
Let $u$ be a function in $L^p(\Omega;\br^3)$ such that the right hand side of (\ref{gradient-estimate}) is finite.
Let $g=\nabla \times u$ in $\Omega$.
Then $g\in L^p(\Omega;\br^3)$ and div$(g)=0$ in $\Omega$.
In view of Theorem \ref{lemma-2.1}, there exists $h\in W^{1,p}(\Omega;\br^3)$ such that
$g=\nabla \times h$ in $\Omega$, div$(h)=0$ in $\Omega$, and
\begin{equation}\label{1.0.1}
\| h\|_{W^{1,p}(\Omega)} \le C_p\, \| \nabla\times u\|_{L^p(\Omega)}.
\end{equation}
Let $w=u-h$ in $\Omega$.
Note that $w\in L^p(\Omega;\br^3)$ and $\nabla \times w=0$ in $\Omega$.
It then follows from Theorem \ref{curl-theorem} that there exists $P\in W^{1,p}(\Omega)$ such that
$w=\nabla P$ in $\Omega$.

We now observe that
\begin{equation}\label{1.0.2}
\text{\rm div} (A\nabla P)
=\text{\rm div}(Au) -\text{\rm div}(A h)\in L^p(\Omega)
\end{equation}
and
\begin{equation}\label{1.0.3}
n\times \nabla P
=n\times u-n\times h\in W^{1-\frac{1}{p}, p} (\partial\Omega),
\end{equation}
where we have used the fact that $\Omega$ is $C^{1,1}$ and
\begin{equation}\label{1.0.4}
\| n\times h\|_{W^{1-\frac{1}{p}, p} (\partial\Omega)}
\le C \, \| h\|_{W^{1-\frac{1}{p}, p} (\partial\Omega)} \le C\, \| h\|_{W^{1, p}(\Omega)}
\le C\, \| \nabla \times u\|_{L^p(\Omega)}.
\end{equation}
Finally, we note that if $\int_{\partial\Omega} P=0$,
$$
\|P\|_{W^{2-\frac{1}{p}, p} (\partial\Omega)}
\le C\, \|\nabla_{\tan} P\|_{W^{1-\frac{1}{p}, p} (\partial\Omega)}
\le C\, \| n \times \nabla P\|_{W^{1-\frac{1}{p}, p} (\partial\Omega)}.
$$
It follows from the $W^{2,p}$ estimates for elliptic equations in $C^{1,1}$ domains (see e.g. \cite{Gilbarg-Trudinger})
that
\begin{equation}\label{1.0.5}
\aligned
\| u\|_{W^{1, p}(\Omega)}
&\le \| h\|_{W^{1, p}(\Omega)} +\| \nabla P\|_{W^{1,p}(\Omega)} \\
& \le C \left\{ \|\nabla \times u\|_{L^p(\Omega)}
+\|\text{\rm div} (A u)\|_{L^p(\Omega)}
+\| n\times u\|_{W^{1-\frac{1}{p}, p} (\partial\Omega)} \right\}.
\endaligned
\end{equation}
This completes the proof.
\end{proof}

\begin{remark}\label{remark-1.0}
{\rm
Assume that $\Omega$ is a bounded, simply-connected, $C^{1,1}$ domain in
$\br^3$ with connected boundary and that $B$ is a positive-definite constant matrix.
Let $u_\varep$ be a solution of (\ref{BVP}).
It follows from (\ref{gradient-estimate}) that
$$
\aligned
\|\nabla u_\varep\|_{W^{1, p}(\Omega)}
& \le C_p \left\{ \|\nabla \times u_\varep\|_{L^p(\Omega)}
+\|\text{\rm div} (Bu_\varep)\|_{L^p(\Omega)}
+\| n\times u_\varep\|_{W^{1-\frac{1}{p}, p} (\partial\Omega)} \right\}\\
&=
C_p \left\{ \|\nabla \times u_\varep\|_{L^p(\Omega)}
+\|\text{\rm div} (F)\|_{L^p(\Omega)}
+\| n\times u_\varep\|_{W^{1-\frac{1}{p}, p} (\partial\Omega)} \right\}.
\endaligned
$$
This, together with (\ref{L-p-estimate}), gives (\ref{L-p-estimate-1}).
}
\end{remark}



\section{Uniform estimates for scalar elliptic equations with periodic coefficients}

In this section we establish the $W^{1,p} $ and Lipschitz estimates for solutions of
the elliptic equation (\ref{elliptic-equation-plus}). These estimates will be used in the proof of Theorems \ref{theorem-A} and
 \ref{theorem-B}.

Let $A=A(y)=(a_{ij} (y))$ be a $d\times d$ matrix in $\br^d$, $d\ge 2$.
We say $A\in \Lambda(\mu, \lambda, \tau)$  for some $\mu>0$, $\tau\in (0,1]$, and $\lambda\ge 0$, if
$A$ satisfies the ellipticity condition,
\begin{equation}\label{ellipticity-1}
\mu |\xi|^2 \le a_{ij} (y) \xi_i \xi_j \le \frac{1}{\mu} |\xi|^2 \quad \text{ for any } y\in \br^d \text{ and } \xi \in \br^d,
\end{equation}
the periodicity condition,
\begin{equation}\label{periodicity-1}
A(y+z) =A(y) \quad \text{ for any } y\in\br^d \text{ and } z\in \mathbb{Z}^d,
\end{equation}
and the smoothness condition,
\begin{equation}\label{smoothness}
|A(x)-A(y)|\le \lambda |x-y|^\tau \text{ for any } x,y \in \br^d.
\end{equation}

We start out with the $W^{1,p}$ estimate for solutions of the Dirichlet problem.

\begin{thm}\label{theorem-2.0}
Let  $1<p<\infty$ and $\Omega$ be a bounded $C^{1,\alpha}$ domain in $\br^d$, $d\ge 2$.
Suppose that $A\in \Lambda (\mu, \lambda, \tau)$. Let $w_\varep \in W^{1, p}(\Omega)$ be the solution
of the Dirichlet problem:
\begin{equation}\label{Dirichlet-problem}
\left\{\aligned
\text{\rm div} \left\{ A(x/\varep) (\nabla w_\varep +g)\right\} & =\text{\rm div} (F) & \quad & \text{ in } \Omega,\\
w_\varep & =f& \quad & \text{ on } \partial\Omega,
\endaligned
\right.
\end{equation}
where $g\in L^p(\Omega;\br^d)$, $F\in L^p(\Omega;\br^d)$,  and $f\in W^{1-\frac{1}{p}, p}(\partial\Omega)$. Then,
\begin{equation}\label{estimate-2.0}
\| w_\varep\|_{W^{1, p}(\Omega)}
\le C_p \left\{ \| g\|_{L^p(\Omega)} + \| F\|_{L^p(\Omega)} +\| f\|_{W^{1-\frac{1}{p}, p} (\partial\Omega)}\right\},
\end{equation}
where $C_p$ depends only on $p$, $\mu$, $\lambda$, $\tau$, and $\Omega$.
\end{thm}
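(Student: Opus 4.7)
The plan is to reduce (\ref{Dirichlet-problem}) to the standard Dirichlet problem for an equation of the form $\text{div}(A(x/\varep)\nabla\,\cdot\,) = \text{div}(\cdot)$ and then to invoke the known uniform $W^{1,p}$ estimates of Avellaneda--Lin \cite{AL-1987, AL-1991} in the scalar case ($m=1$).

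First, I would absorb $A(x/\varep)g$ into the right-hand side. Setting $\widetilde F = F - A(x/\varep) g$, the bound $|A|\le \mu^{-1}$ from (\ref{ellipticity-1}) gives $\|\widetilde F\|_{L^p(\Omega)} \le \|F\|_{L^p(\Omega)} + \mu^{-1}\|g\|_{L^p(\Omega)}$, and (\ref{Dirichlet-problem}) becomes
\[
\text{div}\bigl(A(x/\varep)\nabla w_\varep\bigr) = \text{div}(\widetilde F) \text{ in } \Omega, \qquad w_\varep = f \text{ on } \partial\Omega.
\]
Next, I would remove the boundary data by extension. Pick any $\widetilde f \in W^{1,p}(\Omega)$ with trace $f$ and $\|\widetilde f\|_{W^{1,p}(\Omega)} \le C\|f\|_{W^{1-\frac1p,p}(\partial\Omega)}$. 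Then $v_\varep = w_\varep - \widetilde f \in W^{1,p}_0(\Omega)$ solves
\[
\text{div}\bigl(A(x/\varep)\nabla v_\varep\bigr) = \text{div}\bigl(\widetilde F - A(x/\varep)\nabla \widetilde f\bigr) \text{ in } \Omega,
\]
with zero Dirichlet data and a right-hand side $H = \widetilde F - A(x/\varep)\nabla\widetilde f$ whose $L^p$ norm is controlled by the right-hand side of (\ref{estimate-2.0}).

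The key input is then the uniform $W^{1,p}$ estimate for the homogeneous Dirichlet problem $\text{div}(A(x/\varep)\nabla v_\varep) = \text{div}(H)$ with $v_\varep\in W^{1,p}_0(\Omega)$: under the hypotheses $A \in \Lambda(\mu,\lambda,\tau)$ and $\Omega\in C^{1,\alpha}$, there is a constant $C_p$, independent of $\varep$, such that $\|\nabla v_\varep\|_{L^p(\Omega)} \le C_p \|H\|_{L^p(\Omega)}$. This is the scalar case ($m=1$) of the $W^{1,p}$ estimates in \cite{AL-1987, AL-1991} (also \cite{kls1}). Combining this with the Poincar\'e inequality $\|v_\varep\|_{L^p} \le C\|\nabla v_\varep\|_{L^p}$ on $W^{1,p}_0(\Omega)$ yields
\[
\|v_\varep\|_{W^{1,p}(\Omega)} \le C_p\bigl(\|\widetilde F\|_{L^p(\Omega)} + \|\nabla\widetilde f\|_{L^p(\Omega)}\bigr),
\]
and then $\|w_\varep\|_{W^{1,p}} \le \|v_\varep\|_{W^{1,p}} + \|\widetilde f\|_{W^{1,p}}$ together with the bounds above for $\widetilde F$ and $\widetilde f$ delivers (\ref{estimate-2.0}).

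The only real obstacle is the $\varep$-independence of the constant used in the third step; this is precisely the content of the Avellaneda--Lin uniform $W^{1,p}$ theory, whose proof rests on a compactness argument and Lipschitz estimates for the corrector problem. In the present scalar case, however, that result is available off the shelf under our hypotheses on $A$ and $\Omega$, so beyond the routine reductions above no further work is required.
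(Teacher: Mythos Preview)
Your proof is correct and follows essentially the same approach as the paper: both move $A(x/\varep)g$ to the right-hand side and reduce to the standard uniform $W^{1,p}$ estimate of Avellaneda--Lin. The paper's proof is even terser, citing \cite[Theorem~C]{AL-1991} directly (which already accommodates nonzero boundary data $f\in W^{1-\frac{1}{p},p}(\partial\Omega)$), so your extension step $\widetilde f$ is not strictly needed but is of course a valid and standard reduction.
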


\begin{proof} Rewrite the elliptic equation in (\ref{Dirichlet-problem}) as
\begin{equation}\label{2.0.1}
\text{div} \left\{ A(x/\varep)\nabla w_\varep\right\} =\text{div} \big( F-A(x/\varep) g\big).
\end{equation}
The estimate (\ref{estimate-2.0}) is a simple consequence of \cite[Theorem C]{AL-1991}.
\end{proof}

The next theorem establishes the Lipschitz estimate for solutions of the
Dirichlet problem.

\begin{thm}\label{theorem-2.2}
Suppose that $A$ and $\Omega$ satisfy the same assumptions as in Theorem \ref{theorem-2.0}.
Let $g\in C^\gamma (\Omega; \br^d)$, $F\in C^\gamma (\Omega; \br^d)$, and $f\in C^{1,\gamma}(\partial\Omega)$ for some $0<\gamma<\alpha$.
Let $w_\varep\in H^1(\Omega)$ be the solution of the Dirichlet problem (\ref{Dirichlet-problem}).
Then $\nabla w_\varep \in L^\infty(\Omega)$ and
\begin{equation}\label{estimate-2.2}
\|\nabla w_\varep\|_{L^\infty(\Omega)}
\le C_\gamma\left\{ \| g\|_{C^\gamma (\Omega)} +\| F\|_{C^\gamma (\Omega)}
+\| f \|_{C^{1, \gamma}(\partial\Omega)} \right\},
\end{equation}
where $C_\gamma$ depends only on $\gamma$, $\mu$, $\lambda$, $\tau$, and $\Omega$.
\end{thm}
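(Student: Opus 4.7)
The plan is to split $w_\varepsilon = u_\varepsilon + v_\varepsilon$, where $u_\varepsilon \in H^1(\Omega)$ solves the standard Dirichlet problem
$$-\text{\rm div}(A(x/\varepsilon)\nabla u_\varepsilon) = -\text{\rm div}(F) \text{ in } \Omega, \quad u_\varepsilon = f \text{ on } \partial\Omega,$$
so that the Avellaneda--Lin Lipschitz estimate from \cite{AL-1987} handles the $F$ and $f$ contributions directly:
$\|\nabla u_\varepsilon\|_{L^\infty(\Omega)} \le C_\gamma (\|F\|_{C^\gamma(\Omega)} + \|f\|_{C^{1,\gamma}(\partial\Omega)})$.
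The remainder $v_\varepsilon := w_\varepsilon - u_\varepsilon$ then satisfies
$$-\text{\rm div}(A(x/\varepsilon)\nabla v_\varepsilon) = \text{\rm div}(A(x/\varepsilon) g) \text{ in } \Omega, \quad v_\varepsilon = 0 \text{ on } \partial\Omega,$$
and the whole problem reduces to showing $\|\nabla v_\varepsilon\|_{L^\infty(\Omega)} \le C_\gamma \|g\|_{C^\gamma(\Omega)}$ uniformly in $\varepsilon$.

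For this I would represent $v_\varepsilon$ through the Dirichlet Green function $G_\varepsilon(x,y)$ for $-\text{\rm div}(A(x/\varepsilon)\nabla\cdot)$ in $\Omega$:
$$v_\varepsilon(x) = -\int_\Omega \nabla_y G_\varepsilon(x,y)\cdot A(y/\varepsilon)\, g(y)\, dy.$$
Fix any $x_0\in\Omega$ and perform the Hölder splitting $g(y) = (g(y) - g(x_0)) + g(x_0)$, giving
$$v_\varepsilon(x) = -\int_\Omega \nabla_y G_\varepsilon(x,y)\cdot A(y/\varepsilon)\, (g(y) - g(x_0))\, dy - g_k(x_0)\, T_\varepsilon^k(x),$$
where $T_\varepsilon^k(x) := \int_\Omega \nabla_y G_\varepsilon(x,y)\cdot A(y/\varepsilon) e_k \, dy$. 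By the representation formula, $T_\varepsilon^k$ is precisely the solution of
$-\text{\rm div}(A(x/\varepsilon)\nabla T_\varepsilon^k) = -\text{\rm div}(A(x/\varepsilon)e_k)$ in $\Omega$ with $T_\varepsilon^k = 0$ on $\partial\Omega$. The crucial observation is that $T_\varepsilon^k(x) = x_k - h_\varepsilon^k(x)$, where $h_\varepsilon^k$ is $A_\varepsilon$-harmonic with smooth boundary data $h_\varepsilon^k = x_k$ on $\partial\Omega$; applying the Avellaneda--Lin Lipschitz estimate to $h_\varepsilon^k$ gives $\|\nabla T_\varepsilon^k\|_{L^\infty(\Omega)} \le C$ uniformly in $\varepsilon$.

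Differentiating at $x = x_0$ then yields
$$\partial_{x_i} v_\varepsilon(x_0) = -\int_\Omega \partial_{x_i}\nabla_y G_\varepsilon(x_0,y) \cdot A(y/\varepsilon) (g(y) - g(x_0))\, dy - g_k(x_0)\, \partial_{x_i} T_\varepsilon^k(x_0).$$
The second term is bounded by $\|g\|_{L^\infty}\|\nabla T_\varepsilon^k\|_{L^\infty} \le C\|g\|_{L^\infty}$. For the first, I would combine the pointwise Avellaneda--Lin bound $|\partial_{x_i}\nabla_y G_\varepsilon(x,y)| \le C|x-y|^{-d}$ (uniform in $\varepsilon$) with the cancellation $|g(y) - g(x_0)| \le [g]_{C^\gamma}|y-x_0|^\gamma$ to obtain an integrand $\le C [g]_{C^\gamma}|y-x_0|^{\gamma - d}$, which is integrable since $\gamma > 0$. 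Summing yields $|\nabla v_\varepsilon(x_0)| \le C\|g\|_{C^\gamma}$ for every $x_0 \in \Omega$.

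The main technical obstacle is justifying the pointwise double-derivative bound on $G_\varepsilon$ uniformly up to the boundary of the $C^{1,\alpha}$ domain, together with the passage of $\partial_{x_i}$ under the integral: the kernel $\partial_{x_i}\nabla_y G_\varepsilon$ is borderline non-integrable, and only the Hölder cancellation of $g$ rescues the argument. This is why decomposing near $x_0$ ($|y - x_0| \le 2|h|$ contribution handled by the crude $|\nabla_y G_\varepsilon| \le C|x-y|^{1-d}$ estimate and the $|h|^\gamma$ bound on $|g(y) - g(x_0)|$) versus far ($|y - x_0| > 2|h|$, handled by dominated convergence with the dominating function $C[g]_{C^\gamma}|y-x_0|^{\gamma-d}$) is essential. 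These Green function estimates come from the Avellaneda--Lin Lipschitz theory applied both to $G_\varepsilon(\cdot, y)$ and its adjoint Green function.
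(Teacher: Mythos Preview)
Your argument is correct and essentially coincides with the paper's proof. Your function $h_\varepsilon^k$ (the $A_\varepsilon$-harmonic function with boundary data $x_k$) is exactly the Dirichlet corrector $\Phi_{\varepsilon,k}$ that the paper introduces, and your identity $T_\varepsilon^k = x_k - h_\varepsilon^k$ is the paper's formula $\Phi_{\varepsilon,k}(x) - x_k = -\int_\Omega \partial_{y_i} G_\varepsilon(x,y)\, a_{ik}(y/\varepsilon)\, dy$; the core step---absorbing the oscillation of $A(y/\varepsilon)$ into the corrector and invoking the Avellaneda--Lin bound $\|\nabla \Phi_\varepsilon\|_{L^\infty}\le C$---is identical. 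The only organizational difference is that the paper reduces to $f=0$ by extension and then treats the $F$-term via the same Green-function H\"older-splitting argument (without needing the corrector, since no $A(y/\varepsilon)$ factor appears), whereas you bundle $F$ and $f$ into a single piece and invoke the Avellaneda--Lin Lipschitz estimate as a black box; both routes are valid and yield the same bound.
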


\begin{proof}
We begin by choosing $h \in C^{1,\gamma}(\overline{\Omega})$ so that
$h=f$ on $\partial\Omega$ and $\| h \|_{C^{1,\gamma}(\Omega)} \le C\, \| f\|_{C^{1, \gamma}(\partial\Omega)}$.
By considering $w_\varep-h $, we may assume that $f=0$.
Next, in view of (\ref{2.0.1}), we may write
\begin{equation}\label{2.2.1}
\aligned
w_\varep (x)  &=-\int_\Omega \frac{\partial}{\partial y_i} \big\{ G_\varep (x, y)\big\} a_{ij}(y/\varep) g_j (y)\, dy
+\int_\Omega \frac{\partial}{\partial y_i} \big\{ G_\varep (x, y)\big\} F_i (y)\, dy\\
&= w_\varep^{(1)} (x) +w_\varep^{(2)} (x) ,
\endaligned
\end{equation}
where $F=(F_1, \dots, F_d)$ and $G_\varep (x,y)$ denotes the Green function for the operator $-\text{div} (A(x/\varep)\nabla)$
in $\Omega$, with pole at $y$. It follows from \cite{AL-1987} that for any $x,y\in \Omega$,
\begin{equation}\label{Green-function-estimate}
\aligned
|G_\varep(x,y)| & \le C |x-y|^{2-d},\\
|\nabla_x G_\varep (x,y)| +|\nabla_y G_\varep (x,y)| &\le C |x-y|^{1-d},\\
|\nabla_x\nabla_y G_\varep(x,y)| &\le C |x-y|^{-d},
\endaligned
\end{equation}
where $C$ depends only on $\mu$, $\lambda$, $\tau$, and $\Omega$.
We note that if $d=2$, the first inequality in (\ref{Green-function-estimate})
should be replaced by $|G_\varep (x,y)|\le C (1+\log |x-y|)$.
Using (\ref{Green-function-estimate}), we see that for any $x\in \Omega$,
\begin{equation}\label{2.2.3}
\aligned
|\nabla w_\varep^{(2)} (x)|
& =\left|\int_\Omega \frac{\partial}{\partial y_i} \big\{ \nabla_x G_\varep (x,y) \big\} \big\{ F_i (y) -F_i(x) \big\} \, dy\right|\\
&\le C\, \| F\|_{C^\gamma(\Omega)}
\int_\Omega \frac{dy}{|x-y|^{d-\gamma}}\\
&\le C \, \| F\|_{C^\gamma(\Omega)}.
\endaligned
\end{equation}

Finally, to estimate $\nabla w_\varep^{(1)}$, we let $\Phi_\varep (x)$ be the Dirichlet corrector for the
operator $-\text{div}(A(x/\varep)\nabla)$ in $\Omega$; i.e., $\Phi_\varep
=(\Phi_{\varep, 1} (x), \dots, \Phi_{\varep, d} (x) )$ is  the function in  $ H^1(\Omega;\br^d)$
satisfying
\begin{equation}\label{Dirichlet-corrector}
\left\{
\aligned
\text{div} \big(A(x/\varep)\nabla \Phi_{\varep, k}  \big) & =0& \quad & \text{ in } \Omega,\\
\Phi_{\varep, k} & =x_k &  \quad & \text{ on } \partial\Omega.
\endaligned
\right.
\end{equation}
Since $\Phi_{\varep, k} -x_k=0$ on $\partial\Omega$ and
$$
-\text{div} \big\{ A(x/\varep) \nabla \big(\Phi_{\varep, k} -x_k\big)\big\}
=\frac{\partial}{\partial x_i} \big\{  a_{ik} (x/\varep)\big\}\quad \text{ in } \Omega,
$$
we see that
\begin{equation}\label{2.2.5}
\Phi_{\varep, k} (x) -x_k =-\int_\Omega \frac{\partial}{\partial y_i}
\big\{ G_\varep (x,y) \big\} a_{ik} (y/\varep)\, dy
\end{equation}
for $1\le k\le d$.
It follows that
$$
\aligned
|\nabla w_\varep^{(1)} (x)|
& =\left| \int_\Omega \frac{\partial}{\partial y_i} \big\{ \nabla_x G_\varep(x,y) \big\} \big\{ a_{ij} (y/\varep) g_j(y) -a_{ij}(x/\varep) g_j (x) \big\}\, dy \right|\\
&\le  \int_\Omega |\nabla_x\nabla_y G_\varep(x,y)|\, |A(y/\varep)|\, |g(y)-g(x)|\, dy\\
& \qquad +\left| g_j (x) \int_\Omega \frac{\partial}{\partial y_i} \big\{ \nabla_x G_\varep (x,y)\big\} \big\{ a_{ij} (y/\varep)
-a_{ij} (x/\varep) \big\} \, dy \right|\\
&\le C\, \| g\|_{C^\gamma (\Omega)}
+|g_j (x)|\, |\nabla \left\{ \Phi_{\varep, j} (x) -x_j\right\} |,
\endaligned
$$
where we have used (\ref{2.2.5}) and
the estimate $|\nabla_x\nabla_y G_\varep(x,y)|\le C |x-y|^{-d}$
for the last inequality.
This, together with the Lipschitz estimate $\|\nabla \Phi_\varep\|_{L^\infty(\Omega)} \le C$,
established in \cite{AL-1987}, yields
$
\|\nabla w_\varep^{(1)}\|_{L^\infty(\Omega)}
\le C\, \| g\|_{C^\gamma(\Omega)}
$.
The proof is complete.
\end{proof}

We now turn to the $W^{1,p}$ estimate for solutions of the Neumann problem.

\begin{thm}\label{theorem-2.3}
Let $1<p<\infty$ and $\Omega$ be a bounded $C^{1, \alpha}$ domain in $\br^d$, $d\ge 2$.
Suppose that $A\in \Lambda (\mu, \lambda, \tau)$.
Let $w_\varep \in W^{1, p}(\Omega)$ be a solution of the Neumann problem:
\begin{equation}\label{Neumann-problem}
\left\{\aligned
\text{\rm div} \left\{ A(x/\varep) (\nabla w_\varep +g)\right\} & =0 & \quad & \text{ in } \Omega,\\
n\cdot A(x/\varep) (\nabla w_\varep +g) & =f & \quad & \text{ on } \partial\Omega,
\endaligned
\right.
\end{equation}
where $g \in L^p(\Omega;\br^d)$, $f\in W^{-\frac{1}{p}, p}(\partial\Omega)$ and $<f, 1>=0$.
 Then
\begin{equation}\label{estimate-2.3}
\| \nabla w_\varep \|_{L^p(\Omega)} \le C_p \left\{ \| g\|_{L^p(\Omega)} +\| f\|_{W^{-\frac{1}{p}, p} (\partial\Omega)} \right\},
\end{equation}
where $C_p$ depends only on $p$, $\mu$, $\lambda$, $\tau$, and $\Omega$.
\end{thm}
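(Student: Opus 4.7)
The strategy mirrors that of Theorem \ref{theorem-2.0} in the Dirichlet setting: rewrite the equation so that the $g$ term becomes part of a divergence-form source, and then invoke the uniform $W^{1,p}$ estimate for the Neumann problem with periodic coefficients established in \cite{kls1}. In particular, one does \emph{not} need to construct any new estimates from scratch; the work is entirely in recognizing that (\ref{Neumann-problem}) is a special instance of a problem already solved there.

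Concretely, the weak formulation of (\ref{Neumann-problem}) is
\begin{equation*}
\int_\Omega A(x/\varep) \nabla w_\varep \cdot \nabla \varphi\, dx
 = \langle f, \varphi\rangle_{\partial\Omega} - \int_\Omega A(x/\varep)\, g \cdot \nabla \varphi\, dx, \qquad \varphi \in W^{1, p^\prime}(\Omega).
\end{equation*}
Setting $H := A(x/\varep)\, g \in L^p(\Omega;\br^3)$, which satisfies $\|H\|_{L^p(\Omega)} \le \mu^{-1}\|g\|_{L^p(\Omega)}$ by the ellipticity (\ref{ellipticity-1}), this identity says that $w_\varep$ is a weak solution of the Neumann problem
\begin{equation*}
-\text{\rm div}\{A(x/\varep)\nabla w_\varep\} = \text{\rm div}(H)\ \text{in}\ \Omega,\qquad n\cdot A(x/\varep)\nabla w_\varep = f + n\cdot H\ \text{on}\ \partial\Omega,
\end{equation*}
with compatibility condition $\langle f, 1\rangle = 0$ automatic from the hypothesis. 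Up to an additive constant (which does not affect $\nabla w_\varep$), $w_\varep$ coincides with the solution of this reformulated problem.

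The uniform $W^{1,p}$ Neumann estimate from \cite{kls1}, valid for matrices in $\Lambda(\mu,\lambda,\tau)$ on bounded $C^{1,\alpha}$ domains with constants independent of $\varep > 0$, then yields
\begin{equation*}
\|\nabla w_\varep\|_{L^p(\Omega)}
 \le C_p \bigl\{ \|H\|_{L^p(\Omega)} + \|f\|_{W^{-\frac{1}{p}, p}(\partial\Omega)}\bigr\},
\end{equation*}
and combining this with $\|H\|_{L^p} \le \mu^{-1}\|g\|_{L^p}$ gives (\ref{estimate-2.3}). The only point requiring attention is a bookkeeping check that our formulation of the problem matches the one treated in \cite{kls1}; since the two are equivalent by integration by parts against arbitrary test functions $\varphi \in W^{1,p^\prime}(\Omega)$, no real obstacle arises. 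The main substantive work — proving the uniform Neumann $W^{1,p}$ bound for scalar equations in $C^{1,\alpha}$ domains — is already done in \cite{kls1}, and none of its hypotheses (symmetry of $A$, etc.) relevant at the Lipschitz level enter at the $W^{1,p}$ level used here.
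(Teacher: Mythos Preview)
Your argument is correct and matches the paper's approach: the paper simply states that the result is a direct consequence of Theorem~1.1 in \cite{kls1}, and your write-up spells out explicitly the rewriting (absorbing $A(x/\varep)g$ into the source term) that makes this citation apply. No substantive difference.
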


\begin{proof}
This is a direct consequence of Theorem 1.1 in  \cite{kls1}.
\end{proof}

The next theorem gives the Lipschitz estimate for solutions of the Neumann problem
(\ref{Neumann-problem}).
Note that in addition to the ellipticity and periodicity conditions,
we also assume that $A^*=A$; i.e., $a_{ij}(y)=a_{ji}(y)$.

\begin{thm}\label{theorem-2.4}
Let $\Omega$ be a bounded $C^{1, \alpha}$ domain in $\br^d$, $d\ge 2$.
Suppose that $A\in \Lambda (\mu, \lambda, \tau)$ and $A^*=A$.
Let $g\in C^{\gamma} (\Omega;\br^d)$, and $f\in C^\gamma(\partial\Omega)$ with mean value zero, for some $0<\gamma<\alpha$.
Let $w_\varep \in H^1(\Omega)$ be a solution of the Neumann problem (\ref{Neumann-problem}).
Then $\nabla u_\varep \in L^\infty(\Omega)$, and
\begin{equation}\label{estimate-2.4}
\| \nabla u_\varep \|_{L^\infty(\Omega)}
\le C_\gamma \left\{  \| g\|_{C^\gamma (\Omega)} +\| f\|_{C^\gamma (\partial\Omega)}\right\},
\end{equation}
where $C_\gamma $ depends only on $\gamma$, $\mu$, $\lambda$, $\tau$, and $\Omega$.
\end{thm}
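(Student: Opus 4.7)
The plan is to mirror the proof of Theorem \ref{theorem-2.2}, replacing Green's functions and the Dirichlet corrector by their Neumann counterparts. First I would rewrite (\ref{Neumann-problem}) in the form
\begin{equation*}
-\text{\rm div}(A(x/\varep)\nabla w_\varep) = \text{\rm div}(A(x/\varep) g) \text{ in } \Omega, \qquad n\cdot A(x/\varep)\nabla w_\varep = f - n\cdot A(x/\varep) g \text{ on } \partial\Omega.
\end{equation*}
Let $N_\varep(x,y)$ denote the Neumann function for $-\text{\rm div}(A(x/\varep)\nabla\,\cdot\,)$ in $\Omega$, normalized so that $\int_{\partial\Omega} N_\varep(x,y)\,d\sigma(y) = 0$ for each $x$. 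Representing $w_\varep$ in terms of $N_\varep$ and integrating the volume part by parts (so that the resulting boundary term cancels the $n\cdot A g$ piece of the flux data) yields
\begin{equation*}
w_\varep(x) = -\int_\Omega \nabla_y N_\varep(x,y)\cdot A(y/\varep) g(y)\,dy + \int_{\partial\Omega} N_\varep(x,y) f(y)\,d\sigma(y) + \text{const}.
\end{equation*}
Differentiating in $x$ then gives $\nabla w_\varep(x) = I(x) + II(x)$, with $I$ the corresponding volume integral and $II$ the boundary integral.

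The argument then proceeds in close parallel with (\ref{2.2.3}) and the treatment of $\nabla w_\varep^{(1)}$ in Theorem \ref{theorem-2.2}, once one has the Neumann analogues of (\ref{Green-function-estimate}):
\begin{equation*}
|N_\varep(x,y)| \le C|x-y|^{2-d}, \quad |\nabla_x N_\varep| + |\nabla_y N_\varep| \le C|x-y|^{1-d}, \quad |\nabla_x \nabla_y N_\varep(x,y)| \le C|x-y|^{-d}.
\end{equation*}
These would be obtained by the Avellaneda--Lin method, which in the Neumann setting relies on the uniform Lipschitz estimate for the Neumann problem and the accompanying bound $\|\nabla \Psi_\varep\|_{L^\infty(\Omega)} \le C$ for the Neumann corrector $\Psi_\varep$, both proved in \cite{kls1}. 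This is precisely where the symmetry hypothesis $A^* = A$ enters, and it is the main obstacle: once these facts are available, the rest of the proof is a straightforward adaptation of Theorem \ref{theorem-2.2}.

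For $II(x)$ I would exploit the mean-zero hypothesis on $f$ together with the identity $\int_{\partial\Omega} \nabla_x N_\varep(x,y)\,d\sigma(y) = 0$ (immediate from the symmetry $N_\varep(x,y) = N_\varep(y,x)$ and the normalization of $N_\varep$) to rewrite $II(x) = \int_{\partial\Omega} \nabla_x N_\varep(x,y)(f(y) - f(x^*))\,d\sigma(y)$, with $x^* \in \partial\Omega$ a nearest boundary point to $x$. The bound $|II(x)| \le C\|f\|_{C^\gamma(\partial\Omega)}$ then reduces to the elementary estimate $\int_{\partial\Omega} |x-y|^{1-d}|y-x^*|^\gamma\,d\sigma(y) \le C$, obtained by splitting $\partial\Omega$ into a geodesic cap of radius $\text{dist}(x,\partial\Omega)$ around $x^*$ and its complement.

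For $I(x)$ I would apply the $g(y)-g(x)$ trick of Theorem \ref{theorem-2.2}: the H\"older piece is controlled by $C\|g\|_{C^\gamma}\int_\Omega |x-y|^{\gamma-d}\,dy \le C\|g\|_{C^\gamma}$, and the remaining piece $g_j(x)\int_\Omega \partial_{x_k}\partial_{y_i} N_\varep(x,y) a_{ij}(y/\varep)\,dy$ is handled via the Neumann corrector. Namely, $x_j - \Psi_{\varep,j}$ solves a Neumann problem with right-hand side $-\partial_{y_i} a_{ij}(y/\varep)$ and a constant conormal flux; its representation by $N_\varep$, together with the identity $\int_{\partial\Omega} N_\varep(x,y) n_i a_{ij}(y/\varep)\,d\sigma(y) = \Psi_{\varep,j}(x) + \text{const}$ (which is the analogue of (\ref{2.2.5})), yields an explicit formula for $\int_\Omega \partial_{y_i} N_\varep(x,y) a_{ij}(y/\varep)\,dy$. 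Differentiating this in $x$ and invoking $\|\nabla\Psi_\varep\|_{L^\infty(\Omega)} \le C$ finishes the proof.
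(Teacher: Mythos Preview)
Your approach is correct and will go through, but the paper streamlines both halves of the argument.

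For the boundary data $f$, the paper simply invokes \cite[Theorem 1.2]{kls1}, which already gives $\|\nabla v_\varep\|_{L^\infty(\Omega)}\le C\|f\|_{C^\gamma(\partial\Omega)}$ for the pure Neumann problem $\text{div}(A(x/\varep)\nabla v_\varep)=0$, $n\cdot A(x/\varep)\nabla v_\varep=f$. Subtracting $v_\varep$ reduces to $f=0$, so your estimate of the surface integral $II(x)$ is never needed (though your argument for it is fine, and incidentally only uses the normalization of $N_\varep$, not the mean-zero of $f$).

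For the volume term, the paper avoids the Neumann corrector $\Psi_\varep$ altogether. After the freezing step it is left with the two ``bad'' pieces
\[
-g_j(x)\int_\Omega \partial_{y_i}\{\nabla_x N_\varep(x,y)\}\big[a_{ij}(y/\varep)-a_{ij}(x/\varep)\big]\,dy
\;-\;a_{ij}(x/\varep)g_j(x)\int_{\partial\Omega} n_i(y)\,\nabla_x N_\varep(x,y)\,d\sigma(y),
\]
and observes that $w_\varep=x_k$ is an \emph{exact} solution of (\ref{Neumann-problem}) with $g_j=-\delta_{jk}$ and $f=0$. Plugging this trivial solution into the same representation produces an identity (the paper's (3.12)) showing that the two bad pieces, summed against $g_j(x)$, equal exactly $-g_j(x)\nabla x_j$. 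Hence
\[
\nabla w_\varep(x)+g(x)=-\int_\Omega \partial_{y_i}\{\nabla_x N_\varep(x,y)\}\,[g_j(y)-g_j(x)]\,a_{ij}(y/\varep)\,dy,
\]
and the $|\nabla_x\nabla_y N_\varep|\le C|x-y|^{-d}$ bound finishes the job. Your route via $\Psi_\varep$ works too, but you would have to set up the precise Neumann corrector identity (the analogue of (\ref{2.2.5})) and chase the extra boundary terms; the paper's ``$x_k$ is a solution'' trick sidesteps all of that.
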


\begin{proof}
Let $v_\varep \in H^1(\Omega)$ be a solution of the Neumann problem:
$\text{div}\big(A(x/\varep)\nabla v_\varep\big)=0$ in $\Omega$ and $n\cdot A(x/\varep)\nabla v_\varep =f$ on $\partial\Omega$.
It follows from \cite[Theorem 1.2]{kls1} that
$$
\|\nabla v_\varep \|_{L^\infty(\Omega)} \le C\, \| f\|_{C^\gamma(\partial\Omega)},
$$
where $C$ depends only on $\gamma$, $\mu$, $\lambda$, $\tau$, and $\Omega$.
Thus, by considering $w_\varep-v_\varep$, we may assume that $f=0$.

Let $g=(g_1, \dots, g_d)\in C^\gamma(\Omega;\br^d)$ and $w_\varep$ be a solution of (\ref{Neumann-problem}) with $f=0$. Then
$$
w_\varep (x)= -\int_\Omega \frac{\partial}{\partial y_i} \big\{ N_\varep(x,y) \big\} a_{ij}(y/\varep) g_j (y)\, dy
 + E
$$
for some constant $E$,
where $N_\varep(x,y)$ denotes the Neumann function for the elliptic operator $-\text{div}(A(x/\varep)\nabla)$
in $\Omega$, with pole at $y$. Under the assumption that $A\in \Lambda (\mu, \lambda,\tau)$ and
$A^*=A$, it was proved in \cite{kls1} that for $d\ge 3$,
\begin{equation}\label{Neumann-function-estimate}
\aligned
|N_\varep(x,y) & \le C |x-y|^{2-d},\\
|\nabla_x N_\varep (x,y)| +|\nabla_y N_\varep (x,y)| &\le C |x-y|^{1-d},\\
|\nabla_x\nabla_y N_\varep(x,y)| &\le C |x-y|^{-d},
\endaligned
\end{equation}
where $C$ depends only on $\mu$, $\lambda$, $\tau$, and $\Omega$.
If $d=2$, one obtains $|N_\varep (x,y)|\le C_\eta |x-y|^{-\eta}$,
$|\nabla_x N_\varep(x,y)| +|\nabla_y N_\varep (x,y)|\le C_\eta |x-y|^{-1-\eta}$,
and $|\nabla_x\nabla_y N_\varep (x,y)|\le C_\eta |x-y|^{-2-\eta}$
for any $\eta>0$ (this is not sharp, but enough for the proof of this theorem).
It follows that for any $x\in \Omega$,
\begin{equation}\label{2.4.1}
\aligned
\nabla w_\varep (x)
&=-\int_\Omega \frac{\partial}{\partial y_i} \big\{ \nabla_x N_\varep (x,y) \big\} \big[ a_{ij}(y/\varep) g_j (y)
-a_{ij}(x/\varep) g_j(x) \big]\, dy\\
& \qquad
-a_{ij}(x/\varep) g_j(x) \int_{\partial\Omega} n_i (y) \nabla_x N_\varep (x,y)\, d\sigma (y)\\
&= -\int_\Omega \frac{\partial}{\partial y_i} \big\{ \nabla_x N_\varep (x,y) \big\} \big[ g_j (y)
- g_j(x) \big] a_{ij} (y/\varep)\, dy\\
&\qquad -g_j(x) \int_\Omega \frac{\partial}{\partial y_i} \big\{ \nabla_x N_\varep (x,y) \big\} \big[
a_{ij} (y/\varep) -a_{ij} (x/\varep) \big]\, dy\\
&\qquad
-a_{ij}(x/\varep) g_j (x) \int_{\partial\Omega} n_i (y) \nabla_x N_\varep (x,y)\, d\sigma (y).
\endaligned
\end{equation}
Note that if $g_j(x)=-\delta_{jk}$, then $w_\varep (x) =x_k$ is a solution of (\ref{Neumann-problem}) with $f=0$.
In view of (\ref{2.4.1}), this implies that
\begin{equation}\label{2.4.3}
\aligned
\nabla (x_k)
&=\delta_{jk} \int_\Omega \frac{\partial}{\partial y_i} \big\{ \nabla_x N_\varep (x,y)\big\}
\big[ a_{ij}(y/\varep) -a_{ij} (x/\varep) \big]\, dy\\
&\qquad +a_{ij}(x/\varep) \delta_{jk} \int_{\partial\Omega}
n_i(y) \nabla_x N_\varep (x,y)\, d\sigma (y).
\endaligned
\end{equation}
By combining (\ref{2.4.1}) and (\ref{2.4.3}) we obtain
$$
\nabla w_\varep (x) + g_j (x)\nabla (x_j)
=-\int_\Omega \frac{\partial}{\partial y_i} \big\{ \nabla_x N_\varep (x,y) \big\} \big[ g_j (y)
- g_j(x) \big] a_{ij} (y/\varep)\, dy.
$$
As a result, for any $x\in \Omega$,
$$
\aligned
|\nabla w_\varep (x)|
&\le C \|g\|_{L^\infty(\Omega)}
+C\, \| g\|_{C^\gamma (\Omega)}  \int_\Omega \frac{dy}{|x-y|^{d-\gamma}}\\
&\le C \| g\|_{C^\gamma(\Omega)},
\endaligned
$$
where we have used the estimate $|\nabla_x\nabla_y N_\varep (x,y)|\le C |x-y|^{-d}$
(the case $d=2$ may be handled in a similar manner).
This finishes the proof.
\end{proof}



\section{$L^p$ estimates}

The goal of this section is to prove Theorem \ref{theorem-A}.
Throughout this section we will assume that $\Omega$ is a bounded, simply connected, $C^{1, \alpha}$ domain in $\br^3$ with
connected boundary, and that $A, B\in \Lambda (\mu, \lambda, \tau)$.

\begin{lemma}\label{lemma-3.1}
Let $2\le q<3$ and $2\le p\le p_0$, where $\frac{1}{p_0}=\frac{1}{q}-\frac{1}{3}$.
Given $F\in L^q(\Omega;\br^3)$ and $G\in L^p(\Omega;\br^3)$, let $u_\varep$ be the unique solution in $V_0^2(\Omega)$
of (\ref{BVP}) with $f=0$.
Suppose that $u_\varep \in L^q(\Omega;\br^3)$. Then $\nabla \times u_\varep \in L^p(\Omega;\br^3)$, and
\begin{equation}\label{estimate-3.1}
\| \nabla\times  u_\varep \|_{L^p(\Omega)}
\le C \left\{ \| F\|_{L^q(\Omega)} +\| G\|_{L^p(\Omega)} +\| u_\varep\|_{L^q(\Omega)} \right\},
\end{equation}
where $C$ depends only on $q$, $\mu$, $\lambda$, $\tau$, and $\Omega$.
\end{lemma}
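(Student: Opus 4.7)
The plan is to execute the reduction sketched in the introduction: represent $W_\varep := \nabla \times u_\varep$ in the form $A(x/\varep)^{-1}(\nabla Q + G + H)$, where $H$ is an explicit $W^{1,q}$ vector field handling the inhomogeneity, and $Q$ is a scalar potential solving a Neumann problem of the type covered by Theorem \ref{theorem-2.3}. The $L^p$ bound on $W_\varep$ then emerges by combining the scalar Neumann estimate on $\nabla Q$, the hypothesis $G\in L^p$, and Sobolev embedding applied to $H$.

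First I would rewrite (\ref{max-equation}) as
\[
\nabla\times\big(A(x/\varep)W_\varep - G\big) = F - B(x/\varep)u_\varep.
\]
The right-hand side lies in $L^q(\Omega;\br^3)$ by hypothesis and is divergence-free, since taking divergence of (\ref{max-equation}) gives $\text{div}(B(x/\varep)u_\varep) = \text{div}(F)$. Theorem \ref{lemma-2.1} then produces $H\in W^{1,q}(\Omega;\br^3)$ with $\nabla\times H = F - B(x/\varep)u_\varep$, $\text{div}(H)=0$, and
\[
\|H\|_{W^{1,q}(\Omega)} \le C\big\{\|F\|_{L^q(\Omega)}+\|u_\varep\|_{L^q(\Omega)}\big\}.
\]
Now $A(x/\varep)W_\varep - G - H \in L^2(\Omega;\br^3)$ is curl-free on the simply-connected $\Omega$, so Theorem \ref{curl-theorem} furnishes $Q\in W^{1,2}(\Omega)$ with $A(x/\varep)W_\varep = \nabla Q + G + H$, and hence $W_\varep = A(x/\varep)^{-1}(\nabla Q + G + H)$.

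Next I would verify that $Q$ solves a scalar Neumann problem of the form (\ref{Neumann-problem}). The interior equation $\text{div}(A(x/\varep)^{-1}(\nabla Q + G + H))=0$ is just the restatement of $\text{div}(W_\varep) = \text{div}(\nabla\times u_\varep)=0$. For the Neumann datum, I would use the distributional identity
\[
\int_\Omega (\nabla\times u_\varep)\cdot\nabla\psi\,dx = -\int_{\partial\Omega}(n\times u_\varep)\cdot\nabla\psi\,d\sigma, \quad \psi\in C^\infty(\overline{\Omega}),
\]
which, together with the essential condition $n\times u_\varep = 0$, forces $n\cdot W_\varep = 0$ on $\partial\Omega$ in the appropriate trace space. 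Since $A\in\Lambda(\mu,\lambda,\tau)$ implies $A^{-1}\in\Lambda(\mu',\lambda',\tau)$ for some $\mu',\lambda'>0$, Theorem \ref{theorem-2.3} (after the standard $W^{1,2}\to W^{1,p}$ upgrade of $Q$ using that the data $G+H\in L^p$) delivers
\[
\|\nabla Q\|_{L^p(\Omega)} \le C\,\|G+H\|_{L^p(\Omega)}.
\]

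Finally, since $q<3$ in $\br^3$, the Sobolev embedding $W^{1,q}(\Omega)\hookrightarrow L^{p_0}(\Omega)$ holds with exponent $p_0$ exactly as defined in the statement, so $\|H\|_{L^p(\Omega)} \le C\|H\|_{W^{1,q}(\Omega)}$ for every $p\le p_0$. Combining the three previous displays yields
\[
\|W_\varep\|_{L^p(\Omega)} \le C\big\{\|\nabla Q\|_{L^p}+\|G\|_{L^p}+\|H\|_{L^p}\big\} \le C\big\{\|F\|_{L^q}+\|G\|_{L^p}+\|u_\varep\|_{L^q}\big\},
\]
which is (\ref{estimate-3.1}). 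The step I expect to require most care is the rigorous verification of $n\cdot(\nabla\times u_\varep)=0$ for $u_\varep$ only in $V_0^2(\Omega)$: the boundary trace must be interpreted distributionally in a space for which Theorem \ref{theorem-2.3} applies cleanly. Once that trace identity is in place, the rest of the argument is a direct assembly of the Section 2 decomposition theorems and the scalar Neumann $W^{1,p}$ estimate.
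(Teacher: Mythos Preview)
Your proposal is correct and follows essentially the same route as the paper: your $H$ is the paper's $-h_\varep$ and your $Q$ is the paper's $P_\varep$, leading to the identical Neumann problem for $A^{-1}(x/\varep)$ with zero boundary data coming from $n\cdot(\nabla\times u_\varep)=-\text{Div}(n\times u_\varep)=0$, and the same Sobolev embedding $W^{1,q}\hookrightarrow L^{p_0}$ to close. Your explicit flagging of the trace identity and of the $W^{1,2}\to W^{1,p}$ upgrade for $Q$ is, if anything, slightly more careful than the paper's presentation.
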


\begin{proof}
It follows from  the elliptic system in (\ref{max-equation}) that
$$
\text{div} \big( B(x/\varep) u_\varep - F) =0 \quad \text{ in } \Omega.
$$
Since $B(x/\varep) u_\varep-F \in L^q(\Omega;\br^3)$,
by Theorem \ref{lemma-2.1}, there exists $h_\varep \in W^{1, q}(\Omega;\br^3)$ such that
\begin{equation}\label{3.1.1}
\nabla \times h_\varep =B(x/\varep)u_\varep -F \quad \text{ in } \Omega,
\end{equation}
and
\begin{equation}\label{3.1.3}
\| h_\varep \|_{W^{1,q}(\Omega)} \le C_q \, \| B(x/\varep) u_\varep -F\|_{L^q(\Omega)}.
\end{equation}
Thus,
$$
\nabla \times \big\{ A(x/\varep) \nabla \times u_\varep +h_\varep-G  \big\} =0 \quad \text{ in } \Omega.
$$
Since $\Omega$ is simply connected, there exists  $P_\varep \in W^{1,2}(\Omega)$ such that
\begin{equation}\label{3.1.5}
A(x/\varep)\nabla\times u_\varep +h_\varep -G =\nabla P_\varep \quad \text{ in } \Omega.
\end{equation}
It follows that
\begin{equation}\label{3.1.6}
A^{-1}(x/\varep) \nabla P_\varep =\nabla \times u_\varep +A^{-1}(x/\varep) \big( h_\varep
 -  G\big) \quad \text{ in } \Omega.
\end{equation}
Thus, $P_\varep\in W^{1,2}(\Omega)$ is the solution of
\begin{equation}\label{3.1.7}
\left\{\aligned
\text{\rm div} \big\{  A^{-1}(x/\varep)\big( \nabla P_\varep -h_\varep +G\big)\big\}
 & =0 &  \quad & \text{ in } \Omega,\\
 n\cdot  A^{-1}(x/\varep)\big( \nabla P_\varep -h_\varep +G\big)
 & = 0 & \quad & \text{ on } \partial\Omega,
 \endaligned
 \right.
 \end{equation}
 where we have used the fact that
 $$
 n\cdot (\nabla \times u_\varep) =-\text{Div} (n\times u_\varep) =0
 \quad \text{ on } \partial\Omega.
 $$
   In view of Theorem \ref{theorem-2.3}, we obtain
 $$
 \|\nabla P_\varep\|_{L^p(\Omega)}
 \le C\left\{ \| h_\varep\|_{L^p(\Omega)} +\| G\|_{L^p(\Omega)} \right\},
 $$
 where $C$ depends only on $p$, $\mu$, $\lambda$, $\tau$, and $\Omega$. This, together with
 (\ref{3.1.6}) and  the estimate (\ref{3.1.3}), gives
 \begin{equation}\label{3.1.9}
 \aligned
 \|\nabla \times u_\varep\|_{L^p(\Omega)}
 &\le C\left\{ \| h_\varep\|_{L^p(\Omega)} +\| G\|_{L^p(\Omega)} \right\}\\
  &  \le C \left\{ \| h_\varep\|_{W^{1, q}(\Omega)} +\| G\|_{L^p(\Omega)}\right\} \\
 &\le C \left\{ \| F\|_{L^q(\Omega)} +\| G\|_{L^p(\Omega)} +\| u_\varep\|_{L^q(\Omega)} \right\},
 \endaligned
 \end{equation}
 where we also used the Sobolev imbedding for the second inequality.
\end{proof}

\begin{remark}\label{remark-3.1}
{\rm
Let $F\in L^q(\Omega;\br^3)$, $G\in C^\gamma(\Omega;\br^3)$, and $f\in C^\gamma(\partial\Omega;\br^3)$
with $n\cdot f=0$ on $\partial\Omega$ and Div$(f)\in C^\gamma(\partial\Omega)$,
 where $3<q<\infty$ and $\gamma=1-\frac{3}{q}<\alpha$.
Let $u_\varep$ be the solution in $V^2(\Omega)$ of (\ref{BVP}).
Suppose that $A$ is symmetric.
Then
\begin{equation}\label{remark-3.1.1}
\|\nabla \times u_\varep\|_{L^\infty(\Omega)}
\le C \left\{ \| F\|_{L^q(\Omega)} +\| G\|_{C^\gamma(\Omega)} +\|\text{\rm Div} (f)\|_{C^\gamma(\partial\Omega)}
+\| u_\varep\|_{L^q(\Omega)} \right\},
\end{equation}
where $C$ depends only $q$, $\mu$, $\lambda$, $\tau$, and $\Omega$.
To see this, we let $h_\varep\in W^{1,q}(\Omega;\br^3)$ and $P_\varep\in W^{1,2}(\Omega)$
be the same functions as in the proof of Lemma \ref{lemma-3.1}.
It follows from  (\ref{3.1.5}), (\ref{3.1.7}) and Theorem \ref{theorem-2.4} that
$$
\aligned
\|\nabla \times u_\varep\|_{L^\infty(\Omega)} &  \le C \left\{ \|\nabla P_\varep\|_{L^\infty(\Omega)}
+\| h_\varep\|_{L^\infty(\Omega)} +\| G\|_{L^\infty(\Omega)} \right\}\\
&\le C \left\{ \| h_\varep\|_{C^\gamma(\Omega)} + \| G\|_{C^\gamma(\Omega)} +\|\text{\rm Div} (f)\|_{C^\gamma(\partial\Omega)} \right\} \\
&\le C \left\{ \| h_\varep\|_{W^{1,q}(\Omega)} +\| G\|_{C^\gamma (\Omega)}+\|\text{\rm Div} (f)\|_{C^\gamma(\partial\Omega)} \right\}\\
&\le C\left\{ \| F\|_{L^q(\Omega)} +\| G\|_{C^\gamma(\Omega)}
+\|\text{\rm Div} (f)\|_{C^\gamma(\partial\Omega)} +\| u_\varep\|_{L^q(\Omega)} \right\},
\endaligned
$$
where we have used the Sobolev imbedding for the third inequality and (\ref{3.1.3}) for the last.
}
\end{remark}

Next we reverse the roles of $u_\varep$ and $\nabla\times u_\varep$ in the estimate (\ref{estimate-3.1}).

\begin{lemma}\label{lemma-3.2}
Let $2\le q<3$ and $2\le p\le p_0$, where $\frac{1}{p_0}=\frac{1}{q}-\frac13$.
Given $F\in L^p(\Omega;\br^3)$ and $G\in L^q(\Omega;\br^3)$, let $u_\varep$ be the unique solution in $V_0^2(\Omega)$
of (\ref{BVP}) with $f=0$. Suppose that $\nabla \times u_\varep \in L^q(\Omega;\br^3)$.
Then $u_\varep \in L^p(\Omega;\br^3)$ and
\begin{equation}\label{estimate-3.2}
\| u_\varep \|_{L^p(\Omega)}
\le C\left\{ \| F\|_{L^p(\Omega)} +\| G\|_{L^q(\Omega)} +\| \nabla \times u_\varep\|_{L^q(\Omega)} \right\},
\end{equation}
where $C$ depends only on  $q$, $\mu$, $\lambda$, $\tau$, and  $\Omega$.
\end{lemma}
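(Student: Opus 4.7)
The plan is to mirror Lemma \ref{lemma-3.1}, but with the roles of divergence and curl swapped: the condition $n\cdot(\nabla\times u_\varep)=0$ that drove Lemma \ref{lemma-3.1} into a scalar Neumann problem will be replaced by the condition $n\times u_\varep=0$, which will push us into a scalar \emph{Dirichlet} problem; correspondingly, Theorem \ref{theorem-2.0} will play the role that Theorem \ref{theorem-2.3} played before. First, since $\nabla\times u_\varep\in L^q(\Omega;\br^3)$ is automatically divergence-free, Theorem \ref{lemma-2.1} will produce $v_\varep\in W^{1,q}(\Omega;\br^3)$ with $\nabla\times v_\varep=\nabla\times u_\varep$, $\text{\rm div}(v_\varep)=0$ in $\Omega$, and $\|v_\varep\|_{W^{1,q}(\Omega)}\le C\|\nabla\times u_\varep\|_{L^q(\Omega)}$. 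Since $\nabla\times(u_\varep-v_\varep)=0$ in the simply connected domain $\Omega$, Theorem \ref{curl-theorem} will then furnish $P_\varep\in W^{1,2}(\Omega)$ with $u_\varep=\nabla P_\varep+v_\varep$.

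Taking the divergence of (\ref{max-equation}) (both curl terms drop) will give $\text{\rm div}(B(x/\varep)u_\varep)=\text{\rm div}(F)$ distributionally in $\Omega$, and substituting the decomposition yields the scalar equation
\[
\text{\rm div}\bigl\{B(x/\varep)(\nabla P_\varep+v_\varep)\bigr\}=\text{\rm div}(F)\quad\text{in }\Omega.
\]
The Dirichlet datum will be extracted from $n\times u_\varep=0$: writing the constraint tangentially, $\nabla_{\tan}P_\varep=-(v_\varep)_{\tan}$ on $\partial\Omega$, so $P_\varep|_{\partial\Omega}$ will equal, up to an additive constant, a scalar primitive $\phi$ of the tangential vector field $-(v_\varep)_{\tan}$. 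Existence of $\phi$ will follow from the identity $n\cdot(\nabla\times w)=-\text{\rm Div}(n\times w)$, which together with $n\times u_\varep=0$ forces the surface curl of $(v_\varep)_{\tan}$ to vanish, combined with the simple-connectedness of $\partial\Omega$ (topologically $S^2$, under the hypotheses on $\Omega$). Surface elliptic regularity then yields $\|\phi\|_{W^{2-1/q,q}(\partial\Omega)}\le C\,\|v_\varep\|_{W^{1,q}(\Omega)}$.

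I will then apply Theorem \ref{theorem-2.0} (with coefficient matrix $B$, drift $v_\varep$, source $F$, and Dirichlet datum $\phi$) to get
\[
\|P_\varep\|_{W^{1,p}(\Omega)}\le C\bigl\{\|v_\varep\|_{L^p(\Omega)}+\|F\|_{L^p(\Omega)}+\|\phi\|_{W^{1-1/p,p}(\partial\Omega)}\bigr\}.
\]
The bulk Sobolev embedding $W^{1,q}(\Omega)\hookrightarrow L^p(\Omega)$ (valid in $\br^3$ precisely when $1/p\ge 1/q-1/3$, i.e.\ $p\le p_0$) bounds $\|v_\varep\|_{L^p(\Omega)}$ by $C\,\|\nabla\times u_\varep\|_{L^q(\Omega)}$, and the analogous surface embedding $W^{2-1/q,q}(\partial\Omega)\hookrightarrow W^{1-1/p,p}(\partial\Omega)$ on the $2$-dimensional boundary (valid under the same condition $p\le p_0$) controls the boundary term. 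Combining these with $u_\varep=\nabla P_\varep+v_\varep$ will give (\ref{estimate-3.2}); note that the $\|G\|_{L^q(\Omega)}$ term in the statement is never actually needed, since $G$ enters only through a curl and disappears when we take the divergence.

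The main technical obstacle will be the construction and estimation of the Dirichlet datum $\phi$: one must verify the tangential curl-free compatibility of $(v_\varep)_{\tan}$, invoke simple-connectedness of $\partial\Omega$ to solve $\nabla_{\tan}\phi=-(v_\varep)_{\tan}$ globally, and then match the surface Sobolev scaling. It is precisely the sharpness of the $2$-dimensional surface embedding $W^{2-1/q,q}(\partial\Omega)\hookrightarrow W^{1-1/p,p}(\partial\Omega)$ that pins down the Sobolev exponent $p_0=3q/(3-q)$ appearing in the hypothesis.
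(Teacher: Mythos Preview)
Your plan is correct and follows essentially the same route as the paper: produce a vector potential for $\nabla\times u_\varep$ via Theorem~\ref{lemma-2.1}, write $u_\varep=\nabla P_\varep+v_\varep$, reduce to the scalar divergence-form equation $\text{div}\{B(x/\varep)(\nabla P_\varep+v_\varep)\}=\text{div}(F)$, and invoke Theorem~\ref{theorem-2.0} with the Dirichlet data coming from $n\times u_\varep=0$.

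Two minor differences are worth noting. First, the paper takes a slight detour by setting $v_\varep=A(x/\varep)\nabla\times u_\varep-G$ and applying Theorem~\ref{lemma-2.1} to $A^{-1}(x/\varep)(v_\varep+G)=\nabla\times u_\varep$, which is why the $\|G\|_{L^q}$ term appears in the final bound; your direct application of Theorem~\ref{lemma-2.1} to $\nabla\times u_\varep$ is cleaner and confirms your observation that the $G$-term is superfluous. Second, for the boundary estimate the paper avoids constructing a separate primitive $\phi$ on $\partial\Omega$: since $P_\varep$ already exists in $\Omega$, its trace \emph{is} the Dirichlet datum, and one only needs to bound $\|P_\varep\|_{W^{1-1/p,p}(\partial\Omega)}$. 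The paper does this in one line via $\|P_\varep\|_{W^{1-1/p,p}(\partial\Omega)}\le C\|\nabla_{\tan}P_\varep\|_{L^{q_1}(\partial\Omega)}=C\|n\times v_\varep\|_{L^{q_1}(\partial\Omega)}$ with $q_1=2p/3$, followed by the surface Sobolev embedding $W^{1-1/q,q}(\partial\Omega)\hookrightarrow L^{q_1}(\partial\Omega)$ and the trace theorem. This sidesteps your appeal to surface curl-freeness, simple-connectedness of $\partial\Omega$, and surface elliptic regularity, which---while correct---are unnecessary here.
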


\begin{proof}
Let
\begin{equation}\label{3.2.1}
v_\varep =A(x/\varep) \nabla \times u_\varep -G \quad \text{ in } \Omega.
\end{equation}
Then $\nabla \times u_\varep =A^{-1}(x/\varep) \big( v_\varep +G\big)$ in $\Omega$.
It follows that
$$
\text{\rm div} \big( A^{-1}(x/\varep) (v_\varep +G)\big) =0 \quad \text{ in } \Omega.
$$
By Theorem \ref{lemma-2.1} there exists $h_\varep \in W^{1, q}(\Omega;\br^3)$ such that
\begin{equation}\label{3.2.3}
A^{-1}(x/\varep) ( v_\varep +G) =\nabla \times h_\varep \quad \text{ in } \Omega
\end{equation}
and
\begin{equation}\label{3.2.5}
\| h_\varep\|_{W^{1, q}(\Omega)}
\le C \left\{ \| v_\varep\|_{L^q(\Omega)} +\| G\|_{L^q(\Omega)} \right\}
\le C \left\{ \| \nabla \times u_\varep \|_{L^q(\Omega)} +\| G\|_{L^q(\Omega)} \right\}.
\end{equation}
Note that by the elliptic system  (\ref{max-equation}), $\nabla \times v_\varep =-B(x/\varep) u_\varep + F \text{ in } \Omega$. Thus,
\begin{equation}\label{3.2.7}
u_\varep =B^{-1}(x/\varep) F -B^{-1}(x/\varep) \nabla \times v_\varep \quad \text{ in } \Omega.
\end{equation}
Since $\Omega$ is simply connected and $\nabla \times h_\varep =\nabla \times u_\varep$ in $\Omega$,
there exists $Q_\varep\in W^{1,2}(\Omega)$ such that
$ \nabla Q_\varep =u_\varep -h_\varep $ in $\Omega$. Thus,
$$
B(x/\varep) \big( \nabla Q_\varep +h_\varep\big) = B(x/\varep) u_\varep
=F- \nabla\times  v_\varep  \quad \text{ in } \Omega.
$$
It follows that
\begin{equation}\label{3.2.9}
\text{div} \big\{  B(x/\varep) \big( \nabla Q_\varep +h_\varep\big)
\big\} =\text{div} \big( F\big) \quad \text{ in } \Omega.
\end{equation}
In view of Theorem \ref{theorem-2.0} we obtain
\begin{equation}\label{3.2.10}
\| \nabla Q_\varep\|_{L^p(\Omega)}
 \le C \left\{ \| h_\varep\|_{L^p(\Omega)} +\| F\|_{L^p(\Omega)} + \| Q_\varep \|_{W^{1-\frac{1}{p}, p} (\partial\Omega)} \right\},
\end{equation}
where $C$ depends only on $p$, $\mu$, $\lambda$, $\tau$, and $\Omega$.

Finally, we note that
\begin{equation}\label{3.2.11}
n\times \nabla Q_\varep =n\times u_\varep -n\times h_\varep =-n\times h_\varep \quad \text{ on } \partial\Omega.
\end{equation}
By subtracting a constant we may assume that $\int_{\partial\Omega} Q_\varep\, d\sigma =0$.
Since $|\nabla_{\tan} Q_\varep|=  | n\times \nabla Q_\varep|$ on $\partial\Omega$, we see that
$$
\aligned
\|Q_\varep\|_{W^{1-\frac{1}{p}, p}(\partial\Omega)}
&\le C \, \| n\times \nabla Q_\varep\|_{L^{q_1}(\partial\Omega)}
= C\, \| h_\varep\|_{L^{q_1}(\partial\Omega)}\\
& \le C\, \| h_\varep\|_{W^{1-\frac{1}{q}, q}(\partial \Omega)}
\le C \, \| h_\varep\|_{W^{1,q}(\Omega)}\\
& \le C \left\{ \|\nabla \times u_\varep\|_{L^q(\Omega)} +\| G\|_{L^q(\Omega)} \right\},
\endaligned
$$
where $q_1=2p/3$ and we have used the Sobolev imbedding on $\partial\Omega$ as well as
the estimate (\ref{3.2.5}). This, together with (\ref{3.2.10}) and (\ref{3.2.5}), gives
$$
\aligned
\| u_\varep\|_{L^p(\Omega)}
&\le \| \nabla Q_\varep\|_{L^p(\Omega)} +\| h_\varep\|_{L^p(\Omega)}\\
&\le C \left\{ \| F\|_{L^p(\Omega)} + \| G\|_{L^q(\Omega)} +\|\nabla \times u_\varep \|_{L^q(\Omega)} \right\},
\endaligned
$$
and completes the proof.
\end{proof}

\begin{remark}\label{remark-3.2}
{\rm
Let $F\in C^\gamma(\Omega;\br^3)$, $G\in L^q(\Omega;\br^3)$, and $f\in C^\gamma(\partial\Omega;\br^3)$ with $n\cdot f=0$
on $\partial\Omega$,
where $3<q<\infty$ and $\gamma=1-\frac{3}{q}<\alpha$.
Let $u_\varep$ be the solution in $V^2(\Omega)$ of (\ref{BVP}). Then
\begin{equation}\label{remark-3.2.1}
\|u_\varep\|_{L^\infty(\Omega)}
\le C\left\{ \| F\|_{C^\gamma(\Omega)} +\| G\|_{L^q(\Omega)} +\| f\|_{C^\gamma(\partial\Omega)}
+\| \nabla \times u_\varep\|_{L^q(\Omega)} \right\},
\end{equation}
where $C$ depends only on $q$, $\mu$, $\lambda$, $\tau$, and $\Omega$.
To see this, we let $h_\varep\in W^{1,q}(\Omega;\br^3)$ and $Q_\varep \in W^{1,2}(\Omega)$
be the same functions as in the proof of Lemma \ref{lemma-3.2}.
It follows from Theorem \ref{theorem-2.2} that
\begin{equation}\label{remark-3.2.3}
\aligned
\| u_\varep\|_{L^\infty(\Omega)}
&\le \|\nabla Q_\varep\|_{L^\infty(\Omega)} +\| h_\varep\|_{L^\infty(\Omega)}\\
&\le C \left\{ \| F\|_{C^\gamma(\Omega)} +\| h_\varep\|_{C^\gamma(\Omega)} +\|\nabla_{\tan} Q_\varep\|_{C^\gamma(\partial\Omega)} \right\}.\\
&\le C \left\{ \|F\|_{C^\gamma(\Omega)}
+\| h\|_{W^{1, q}(\Omega)} +\|\nabla_{\tan} Q_\varep\|_{C^\gamma(\partial\Omega)}  \right\}\\
&\le C \left\{ \| F\|_{C^\gamma(\Omega)} +\| G\|_{L^q(\Omega)} +\|\nabla \times u_\varep\|_{L^q(\Omega)}
+\|\nabla_{\tan} Q_\varep\|_{C^\gamma(\partial\Omega)} \right\},
\endaligned
\end{equation}
where we have used the Sobolev embedding for the third inequality and (\ref{3.2.5}) for the
fourth. This, together with the estimate
$$
\aligned
\|\nabla_{\tan} Q_\varep\|_{C^\gamma(\partial\Omega)}
 & \le \| n\times \nabla Q_\varep \|_{C^\gamma (\partial\Omega)}\\
& \le \| n\times u_\varep\|_{C^\gamma(\partial\Omega)} + \| n\times h_\varep\|_{C^\gamma(\partial\Omega)}  \\
&  \le \| f\|_{C^\gamma(\partial\Omega)} +C\, \| h_\varep\|_{W^{1,q}(\Omega)}\\
& \le \| f\|_{C^\gamma (\partial\Omega)} + C \left\{ \|\nabla \times u_\varep\|_{L^q(\Omega)}
+\| G\|_{L^q(\Omega)} \right\},
\endaligned
$$
gives (\ref{remark-3.2.1}).
}
\end{remark}

We are now ready to prove Theorem \ref{theorem-A}.

\begin{proof}[\bf Proof of Theorem \ref{theorem-A}]
Given $f\in  L^p(\partial\Omega;\br^3)$ with $n\cdot f=0$ on $\partial\Omega$ and Div$(f)\in W^{-\frac{1}{p}, p} (\partial\Omega)$,
it follows from \cite[Theorem 11.6]{MMP} that there exists $u\in V^p(\Omega)$ such that $n\times u=f$ on $\partial\Omega$ and
$$
\| u\|_{L^p(\Omega)} +\|\nabla\times  u\|_{L^p(\Omega)} \le C_p\left\{ \| f\|_{L^p(\partial\Omega)}
+\|\text{Div} (f)\|_{W^{-\frac{1}{p}, p} (\partial\Omega)} \right\}.
$$
Consequently, by considering $u_\varep-u$ in $\Omega$, we may assume that $f=0$.

We first consider the case $p>2$.
The uniqueness follows from the uniqueness in the case $p=2$.
Let $F\in L^p(\Omega;\br^3)$, $G\in L^p(\Omega;\br^3)$, and $u_\varep$ be the unique solution of
(\ref{max-equation}) in $V_0^2(\Omega)$.
To establish the $L^p$ estimate (\ref{L-p-estimate}), we further assume that $2<p\le 6$.
Since $\|u_\varep\|_{L^2(\Omega)} +\|\nabla \times u_\varep\|_{L^2(\Omega)} \le C \left\{
\| F\|_{L^2(\Omega)} +\| G\|_{L^2(\Omega)} \right\}$,
it follows from (\ref{estimate-3.1}) that
$$
\aligned
\|\nabla \times u_\varep \|_{L^p(\Omega)}
& \le C \left\{ \| F\|_{L^2(\Omega)} +\| G\|_{L^p(\Omega)} +\| u_\varep\|_{L^2(\Omega)} \right\}\\
& \le C\left\{ \| F\|_{L^p(\Omega)} +\| G\|_{L^p(\Omega)}\right\}.
\endaligned
$$
Similarly, by the estimate (\ref{estimate-3.2}),
$$
\aligned
\| u_\varep\|_{L^p(\Omega)}
&\le C \left\{ \| F\|_{L^p(\Omega)} +\| G\|_{L^2(\Omega)} +\| \nabla \times u_\varep\|_{L^2(\Omega)} \right\}\\
&  \le C\left\{ \| F\|_{L^p(\Omega)} +\| G\|_{L^p(\Omega)}\right\}.
\endaligned
$$

Suppose now that $p>6$. Let $\frac{1}{q}=\frac{1}{p}+\frac13$. Then $2<q<3$ and we have proved that
$$
\|u_\varep\|_{L^q(\Omega)} +\|\nabla\times u_\varep\|_{L^q(\Omega)}
\le C \left\{ \| F\|_{L^q(\Omega)} +\| G\|_{L^q(\Omega)} \right\}.
$$
As before, we may use estimates (\ref{estimate-3.1}) and (\ref{estimate-3.2}) to obtain
$$
\aligned
\|\nabla \times u_\varep\|_{L^p(\Omega)}
&\le C \left\{ \| F\|_{L^q(\Omega)} +\|G\|_{L^p(\Omega)} +\| u_\varep\|_{L^q(\Omega)} \right\}\\
& \le C \left\{ \| F\|_{L^p(\Omega)} +\| G\|_{L^p(\Omega)}\right\},
\endaligned
$$
and
$$
\aligned
\| u_\varep\|_{L^p(\Omega)}
&\le C \left\{ \|F\|_{L^p(\Omega)} + \| G\|_{L^q(\Omega)} +\|\nabla \times u_\varep\|_{L^q(\Omega)} \right\}\\
&\le C \left\{ \| F\|_{L^p(\Omega)} +\| G\|_{L^p(\Omega)}\right\}.
\endaligned
$$

Finally, we handle the case $1<p<2$ by a duality argument.
Let $F, G\in C_0^\infty(\Omega; \br^3)$ and $u_\varep$ be the solution in $V_0^2(\Omega)$ of (\ref{max-equation}).
Let $F_1$, $G_1 \in C_0^\infty(\Omega;\br^3)$ and $v_\varep$ be the solution in $V_0^2(\Omega)$ of
$$
\left\{
\aligned
\nabla \times \big( A^*(x/\varep)\nabla \times v_\varep\big) + B^*(x/\varep) v_\varep & = F_1 +\nabla \times G_1 & \quad
& \text{ in } \Omega,\\
n\times v_\varep & =0 & \quad & \text{ on } \partial\Omega,
\endaligned
\right.
$$
where $A^*$ and $B^*$ are the adjoints of $A$ and $B$, respectively.
Since $A^*$ and $B^*$ satisfy the same conditions as $A$ and $B$, we see that
\begin{equation}\label{3.3.1}
\| v_\varep\|_{L^{p^\prime}(\Omega)}
+\|\nabla \times v_\varep\|_{L^{p^\prime}(\Omega)}
\le C \left\{ \|F_1\|_{L^{p^\prime}(\Omega)}
+\|G_1\|_{L^{p^\prime}(\Omega)} \right\}.
\end{equation}
Note that
\begin{equation}\label{3.3.2}
\aligned
& \int_\Omega F_1\cdot u_\varep\, dx +\int_\Omega G_1\cdot \nabla \times u_\varep\, dx\\
&=\int_\Omega A(x/\varep)\nabla \times u_\varep \cdot \nabla\times v_\varep\, dx
+\int_\Omega B(x/\varep) u_\varep \cdot v_\varep\, dx\\
& =\int_\Omega F\cdot v_\varep\, dx
+\int_\Omega G\cdot \nabla \times v_\varep\, dx.
\endaligned
\end{equation}
This, together with (\ref{3.3.1}), yields
\begin{equation}\label{3.3.3}
\| u_\varep\|_{L^p(\Omega)}
+\|\nabla \times u_\varep\|_{L^p(\Omega)}
\le C \left\{ \| F\|_{L^p(\Omega)} +\|G\|_{L^p(\Omega)} \right\},
\end{equation}
by duality. With the estimate (\ref{3.3.3}) at our disposal, the existence of solutions in $V^p(\Omega)$
as well as the estimate (\ref{L-p-estimate}) for
arbitrary data $F, G\in L^p(\Omega;\br^3)$ follows readily by a density argument.
Observe  that the duality relation (\ref{3.3.2}) holds as long as $u_\varep \in V^p(\Omega)$ and $v_\varep\in V^{p^\prime}(\Omega)$
are solutions of (\ref{max-equation}) and its adjoint system, respectively.
The uniqueness for $p<2$ also follows from (\ref{3.3.2}) and (\ref{3.3.1}) by duality.
This completes the proof of Theorem \ref{theorem-A}.
\end{proof}



\section{Proof of Theorem \ref{theorem-B}}

Choose $q>3$ such that $\gamma =1-\frac{3}{q}$.
It follows from estimates  (\ref{remark-3.1.1}) and (\ref{remark-3.2.1}) that
$$
\aligned
 \| u_\varep\|_{L^\infty(\Omega)}
& +\|\nabla \times u_\varep\|_{L^\infty(\Omega)}\\
& \le C \big\{ \| F\|_{C^\gamma(\Omega)} +\| G\|_{C^\gamma(\Omega)}
+\| f\|_{C^\gamma (\partial\Omega)}
+\| \text{Div} (f)\|_{C^\gamma(\partial\Omega)}\\
&\qquad\qquad\qquad\qquad\qquad\qquad
+\| u_\varep\|_{L^q(\Omega)} +\| \nabla \times u_\varep\|_{L^q(\Omega)} \big\}.
\endaligned
$$
This, together with the $L^q$ estimate of $u_\varep$ and $\nabla\times u_\varep$
in Theorem \ref{theorem-A}, gives (\ref{L-infty-estimate}).

\bibliography{ss1}

\bigskip

\begin{flushleft}

Zhongwei Shen, Department of Mathematics,
University of Kentucky,
Lexington, Kentucky 40506,
USA.

E-mail: zshen2@uky.edu

\end{flushleft}

 \begin{flushleft}
 Liang Song,
Department of Mathematics,
Sun Yat-sen  University,
Guangzhou, 510275,
P. R. China.

E-mail: songl@mail.sysu.edu.cn

\end{flushleft}

\medskip

\noindent \today

\end{document}